\theoremstyle{plain}
\newtheorem{theorem}{Theorem}[section]
\newtheorem{lemma}[theorem]{Lemma}
\newtheorem{proposition}[theorem]{Proposition}
\newtheorem{prop-def}[theorem]{Proposition-Definition}
\newtheorem{corollary}[theorem]{Corollary}
\theoremstyle{definition}
\newtheorem{remark}[theorem]{Remark}
\theoremstyle{remark}
\newtheorem*{ack}{Acknowledgement}
\numberwithin{equation}{section}
\def\Pic{\mathrm{Pic}}
\def\Hom{\mathrm{Hom}}
\def\Spec{\mathrm{Spec}}
\def\Int{\mathrm{Int}}
\def\coker{\mathrm{coker}}
\def\Ex{\mathrm{Ex}}
\def\Supp{\mathrm{Supp}}
\def\Jac{\mathrm{Jac}}
\def\lct{\mathrm{lct}}
\def\red{\mathrm{red}}
\def\gcd{\mathrm{gcd}}
\def\Sp{\mathrm{Sp}}
\def\Id{\mathrm{Id}}
\def\HS{\mathrm{MHS}}
\def\hsp{\mathrm{Hsp}}
\def\mon{\mathrm{mon}}
\def\Sp{\mathrm{Sp}}
\def\p{\mathbf{p}}
\title[Homogeneous singularities]{\bf On complex homogeneous singularities}  
\author[L\^e]{L\^e Quy Thuong}
\address{Department of Mathematics, Vietnam National University \newline\indent 334 Nguyen Trai Street, Thanh Xuan District, Hanoi, Vietnam}
\email{leqthuong@gmail.com}
\address{BCAM - Basque Center for Applied Mathematics \newline \indent Alameda de Mazarredo 14, E-48009 Bilbao, Basque Country, Spain}
\email{qle@bcamath.org}
\author[Nguyen]{Nguyen Phu Hoang Lan}
\address{Department of Mathematics, Vietnam National University \newline\indent 334 Nguyen Trai Street, Thanh Xuan District, Hanoi, Vietnam}
\email{nphlan@gmail.com}
\author[Pho]{Pho Duc Tai}
\address{Department of Mathematics, Vietnam National University \newline\indent 334 Nguyen Trai Street, Thanh Xuan District, Hanoi, Vietnam}
\email{phoductai@gmail.com}
\thanks{This research is funded by the Vietnam National Foundation for Science and Technology Development (NAFOSTED) under grant number FWO.101.2015.02.}
\thanks{The first author's research is also supported by ERCEA Consolidator Grant 615655 - NMST and by the Basque Government through the BERC 2014-2017 program and by Spanish Ministry of Economy and Competitiveness MINECO: BCAM Severo Ochoa excellence accreditation SEV-2013-0323.} 
\keywords{Homogeneous singularity, Log-resolution, Local systems, Multiplier ideals, Finite abelian covers, Hodge spectrum, spectrum multiplicity, Monodromy zeta function}
\subjclass[2010]{Primary 14B05, 14C20, 14F18, 32S20}
\begin{document}   

\begin{abstract}
In this article, we consider the singularity of an arbitrary homogeneous polynomial with complex coefficients $f(x_0,\dots,x_n)$ at the origin of $\mathbb C^{n+1}$, via the study of the monodromy characteristic polynomials $\Delta_l(t)$, and the relation between the monodromy zeta function and the Hodge spectrum of the singularity. We go further with $\Delta_1(t)$ in the case $n=2$. This work is based on knowledge of multiplier ideals and local systems.
\end{abstract}
\maketitle                 

\section{Introduction}
Let $f$ be a homogeneous polynomial (not necessarily reduced) of degree $d$ in $n+1$ variables with coefficients in $\mathbb C$, which defines a holomorphic function germ at the origin $O$ of $\mathbb C^{n+1}$. In general, according to \cite{Mil} and \cite{Le}, the Milnor fiber of $(f,O)$ is up to diffeomorphism a manifold $M=f^{-1}(\delta)\cap B_{\varepsilon}$, for $B_{\varepsilon}\subset \mathbb C^{n+1}$ a ball of radius $\varepsilon$ around $O$ and $0<\delta \ll \varepsilon \ll 1$, which has the homotopy type of a bouquet of $\mu$ spheres of dimension $n$. Since here $f$ is a homogeneous polynomial, however, $f^{-1}(\delta)\cap B_{\varepsilon}$ is a deformation retract of $f^{-1}(\delta)\cong f^{-1}(1)$, thus we may consider $M$ as $f^{-1}(1)$. The monodromy $T:H^*(M,\mathbb C)\to H^*(M,\mathbb C)$ of the singularity may be given explicitly to be the $\mathbb C$-linear endomorphism induced by the map $(x_0,\dots,x_n)\mapsto (e^{\frac{2\pi i}{d}}x_0,\dots,e^{\frac{2\pi i}{d}}x_n)$. It becomes classical for $f$ being an isolated homogeneous singularity at $O$ where many important invariants such as the Milnor number $\mu$, the characteristic polynomials of $T$, the signature and Hodge numbers of $M$ are computed completely in topological and algebraic methods as well as via mixed Hodge structures (cf. \cite{MO}, \cite{St1}). 

In the case where $f$ is a reduced homogeneous polynomial, Esnault \cite{Esn} introduced a method to compute the Betti numbers, the rank and the signature of the intersection matrices of the singularity $(f,O)$, using mixed Hodge structures on cohomology groups of the Milnor fiber $M$ and the existence of spectral sequences converging to the cohomology groups, together with resolution of singularity. The work by Esnault definitely inspired the study by Loeser-Vaqui\'e \cite{LV} of the Alexander polynomial of a reduced complex projective plane curve, where they provided a formula for the Alexander polynomial of such a curve which generalizes the previous one by Libgober \cite{Lib1, Lib2}. It is likely that the approaches of Libgober in \cite{Lib2} and Loeser-Vaqui\'e in \cite{LV}, as well as the work by Nadel in \cite{Nad}, are also starting points of the studies on multiplier ideals and local systems, which were thereafter studied strongly by Esnault-Viehweg \cite{EV}, Ein-Lazarsfeld \cite{EL}, Demailly \cite{Dm}, Kollar \cite{Ko97}, Budur \cite{Bu1, Bu3}, Budur-Saito \cite{BS1}. 

Due to the development of the theory of multiplier ideals and local systems, Budur \cite{Bu4} gives an explicit description of the local system of the complement in $\mathbb P^n$ of the divisor defined by a homogeneous polynomial $f$ {\it without the condition of reducedness}. In the present work, we use Budur's article \cite{Bu4} to study the characteristic polynomials, the Hodge spectrum and the monodromy zeta function of an arbitrary homogeneous hypersurface singularity. Let us now review in a few words what we shall do. We denote by $D$ the closed subscheme of $\mathbb P^n$ defined by the zero locus of a degree $d$ homogeneous polynomial $f$ and by $U$ the complement of $D$ in $\mathbb P^n$. Then, as shown in \cite{Bu3, Bu4}, there is an eigensheaf decomposition of the $\mathcal O_U$-module sheaf $\sigma_*\mathbb C_M$ into the unitary local systems $\mathcal V_k$ on $U$ given by the eigensheaf of $T$ with respect to the eigenvalue $e^{-\frac{2\pi ik} d}$, $0\leq k\leq d-1$, where $\sigma$ is the canonical projection $M\to U$. On cohomology level, using the Leray spectral sequence, one gets $H^l(U,\mathcal V_k)$ to be the eigenspace of the monodromy $T$ on $H^l(M,\mathbb C)$ with respect to the eigenvalue $e^{-\frac{2\pi ik} d}$, for any $l$ in $\mathbb N$ (cf. \cite{Bu4}). Assume that $D$ has $r$ distinct irreducible components $D_i$ and that $m_i$ is the multiplicity of $D_i$ in $D$. By \cite[Lemma 4.2]{Bu4}, for each $k$, modulo the identification $RH$ in \cite[Theorem 1.2]{Bu3}, the local system $\mathcal V_k$ is nothing but the element $(\mathcal O_{\mathbb P^n}(\sum_{j=1}^r\{\frac{km_j}{d}\}d_j), (\{\frac{km_1}{d}\},\dots,\{\frac{km_r}{d}\}))$  in the group $\Pic^{\tau}(\mathbb P^n,D)$ of {\it realizations of boundaries of $\mathbb P^n$ on $D$} (cf. \cite[Definition 1.1]{Bu3}). 

The problem of computing the complex dimension of $H^l(U,\mathcal V_k)$ can be solved completely under the works by Budur \cite{Bu1, Bu2, Bu3, Bu4} in terms of resolution of singularity. Let $\pi: Y\to \mathbb P^n$ be a log-resolution of the family $\{D_1,\dots, D_r\}$, with exceptional divisor $E=\pi^*(\bigcup_{j=1}^rD_j)=\sum_{j\in A}N_jE_j$, $E_j$ being irreducible. Denote by $\mathscr L^{(k)}$ the invertible sheaf $\pi^*\mathcal O_{\mathbb P^n}\left(\sum_{j=1}^r\left\{\frac{km_j}{d}\right\}d_j\right) \otimes \mathcal O_Y\left(-\left\lfloor{\sum_{j=1}^r\left\{\frac{km_j}{d}\right\}}\pi^*D_j\right\rfloor\right)$ on $Y$. As proved in Lemma \ref{dimHi}, we get
$$\dim_{\mathbb C}H^l(U,\mathcal V_{d-k})=\sum_{p\geq 0}\dim_{\mathbb C}H^{l-p}(Y,\Omega_Y^p(\log E)\otimes {\mathscr L^{(k)}}^{-1}),$$
for $l\geq 0$ and $1\leq k\leq d$, from which the characteristic polynomial $\Delta_l(t)$ of $T$ on $H^l(M,\mathbb C)$ follows. Observe that this description is not really useful in practice since it is too difficult to compute the number on the right hand side of the previous equality. However, in the special case where $n=2$ and $l=1$, we obtain in Theorem \ref{MainThm1} an explicit formula for $\Delta_1(t)$ in terms of the multiplier ideal of $\sum_{j=1}^r\{\frac{km_j}{d}\}C_j$, where we write $C_j$ instead of $D_j$ when $D$ is a curve $C$. This is the most important result of the article. 

There is another result in the present article, Theorem \ref{Thm4.4}, which discusses the relation between the Hodge spectrum and the monodromy zeta function of a homogeneous singularity. In fact, this result can be realized directly from \cite[Proposition 4.3]{Bu4} and Proposition \ref{dimGr}. 



\section{Multiplier ideals and Hodge spectrum}\label{Section2}
\subsection{Multiplier ideals}
Let $X$ be a smooth complex algebraic variety and let $D=\{D_1,\dots, D_r\}$ be a family of closed subschemes of $X$. A {\it log-resolution} of the family $D$ is a proper birational morphism $\pi: Y\to X$, where $Y$ is a smooth complex algebraic variety, such that the exceptional set $\Ex(\pi):=\{y\in Y \mid \text{$\pi$ is not biregular at $y$}\}$, the support $\Supp(\det\Jac_{\pi})$ of the determinant of the Jacobian of $\pi$, the preimages $\pi^{-1}(D_j)$, $1\leq j\leq r$, and the union $\Ex(\pi)\cup \Supp(\det\Jac_{\pi})\cup \bigcup_{j=1}^r\pi^{-1}(D_j)$ are simple normal crossing divisors. The existence of such a log-resolution is proved by Hironaka. Let $K_X$ (resp. $K_Y$) denote the canonical divisor of $X$ (resp. $Y$). Then $K_{Y/X}:=K_Y-\pi^*K_X$ is the divisor defined by $\det\Jac_{\pi}$, which is known as the canonical divisor of $\pi$. For any $\alpha=(\alpha_1,\dots,\alpha_r)\in \mathbb Q_{>0}^r$, we set
\begin{align}
\mathcal J(X,\alpha D):=\pi_*\mathcal O_Y(K_{Y/X}-\lfloor{\pi^*(\alpha D)}\rfloor),
\end{align}
where $\alpha D:=\sum_{j=1}^r\alpha_j D_j$, and $\lfloor{\pi^*(\alpha D)}\rfloor$ is the round-down of the coefficients of the irreducible components of the divisor $\pi^*(\alpha D)$. It is obvious that $\mathcal J(X,\alpha D)$ is a sheaf of ideals on $X$, which is an ideal of $\pi_*\mathcal O_Y(K_Y)=\mathcal O_X$. 

\begin{theorem}[Lazarsfeld \cite{La1}]
For any $\alpha\in \mathbb Q_{>0}^r$, the sheaf of ideals $\mathcal J(X,\alpha D)$ is independent of the choice of $\pi$, and $R^i\mathcal J(X,\alpha D)=0$ for $i\geq 1$. The sheaf of ideals $\mathcal J(X,\alpha D)$ is called the {\rm multiplier ideal} of $\alpha D$.
\end{theorem}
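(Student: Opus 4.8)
The plan is to treat the two assertions by quite different means: the independence of $\pi$ is essentially formal, reducible to the vanishing of the multiplier ideal of a simple normal crossing $\mathbb{Q}$-divisor with fractional coefficients, while the vanishing $R^i\pi_*\mathcal{O}_Y(K_{Y/X}-\lfloor\pi^*(\alpha D)\rfloor)=0$ for $i\geq 1$ is where the real input, relative Kawamata--Viehweg vanishing, enters. For independence, given two log-resolutions $\pi\colon Y\to X$ and $\pi'\colon Y'\to X$ of $D$, Hironaka's theorem produces a third log-resolution $\pi''\colon Y''\to X$ dominating both over $X$ (resolve the closure of the graph of $Y\dashrightarrow Y'$ and blow up further so that the total transform of everything becomes simple normal crossing). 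Since ``$\pi_*\mathcal{O}_Y(K_{Y/X}-\lfloor\pi^*(\alpha D)\rfloor)$ is a given subsheaf of $\mathcal{O}_X$'' is a transitive relation between resolutions, it suffices to treat the case $\pi'=\pi\circ g$ with $g\colon W\to Y$ a proper birational morphism of smooth varieties, both $\pi$ and $\pi\circ g$ being log-resolutions of $D$.

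In that situation I would set $P:=\pi^*(\alpha D)$ and write $P=\lfloor P\rfloor+\{P\}$; since $g^*\lfloor P\rfloor$ is an integral divisor, $\lfloor g^*P\rfloor=g^*\lfloor P\rfloor+\lfloor g^*\{P\}\rfloor$. Using $K_{W/X}=K_{W/Y}+g^*K_{Y/X}$ and $\pi_W^*(\alpha D)=g^*P$ (with $\pi_W:=\pi\circ g$), one gets
\[
K_{W/X}-\lfloor\pi_W^*(\alpha D)\rfloor \;=\; g^*\bigl(K_{Y/X}-\lfloor P\rfloor\bigr)\;+\;\bigl(K_{W/Y}-\lfloor g^*\{P\}\rfloor\bigr),
\]
so by the projection formula $g_*\mathcal{O}_W(K_{W/X}-\lfloor\pi_W^*(\alpha D)\rfloor)=\mathcal{O}_Y(K_{Y/X}-\lfloor P\rfloor)\otimes g_*\mathcal{O}_W(K_{W/Y}-\lfloor g^*\{P\}\rfloor)$; applying $\pi_*$ reduces everything to the claim $g_*\mathcal{O}_W(K_{W/Y}-\lfloor g^*\Delta\rfloor)=\mathcal{O}_Y$ for $\Delta:=\{P\}$, which is an effective simple normal crossing $\mathbb{Q}$-divisor on $Y$ with all coefficients in $[0,1)$ (the simple normal crossing property coming from $\pi$ being a log-resolution). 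The inclusion ``$\subseteq$'' is immediate: a local section of the left-hand side is a rational function regular on the locus where $g$ is an isomorphism (there $K_{W/Y}=0$ and $\lfloor g^*\Delta\rfloor=\lfloor\Delta\rfloor=0$), hence regular on all of $Y$ by normality, the complement of that locus having codimension $\geq 2$. The inclusion ``$\supseteq$'' follows from the comparison of effective divisors $K_{W/Y}\geq\lfloor g^*\Delta\rfloor$, proved by induction over a factorization of $g$ into blow-ups along smooth centres in simple normal crossing position: at each stage the discrepancy of the new exceptional prime divisor equals its centre's codimension minus one, while the simple normal crossing hypothesis bounds the multiplicity there of the total transform of the \emph{fractional} part by that same codimension, each of the relevant coefficients being $<1$.

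For the vanishing, which is local on $X$, I would assume $X$ affine and put $M:=\pi^*(-K_X-\alpha D)$. Because $\pi^*K_X$ is an integral divisor and $\{\pi^*(\alpha D)\}$ has coefficients in $[0,1)$, one checks that
\[
K_{Y/X}-\lfloor\pi^*(\alpha D)\rfloor \;=\; K_Y+\pi^*(-K_X)-\lfloor\pi^*(\alpha D)\rfloor \;=\; K_Y+\lceil M\rceil ,
\]
and that the fractional part of $M$ is supported on the simple normal crossing divisor $\Supp(\{\pi^*(\alpha D)\})$. Moreover $M$ is pulled back from the affine base $X$, hence $\pi$-numerically trivial, so $\pi$-nef, and trivially $\pi$-big as $\pi$ is birational. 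The relative Kawamata--Viehweg vanishing theorem applied to $M$ then gives $R^i\pi_*\mathcal{O}_Y(K_Y+\lceil M\rceil)=R^i\pi_*\mathcal{O}_Y(K_{Y/X}-\lfloor\pi^*(\alpha D)\rfloor)=0$ for all $i\geq 1$; passing to an affine open of $X$ is harmless since twisting the sheaf by the pullback of an invertible sheaf on $X$ does not change higher direct images, again by the projection formula. I expect the \emph{main obstacle} to be precisely this last step: everything else is the projection formula, Hironaka's resolution, and the elementary discrepancy-versus-coefficient bookkeeping above, so at this level of generality one genuinely invokes Kawamata--Viehweg vanishing in its relative form, which is why the statement is attributed to \cite{La1}.
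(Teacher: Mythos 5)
The paper attributes this theorem to Lazarsfeld \cite{La1} and gives no proof of its own, so there is no internal argument to compare against; what you supply is essentially the standard proof from that reference. The structure is right: independence of $\pi$ is reduced, by dominating two log-resolutions by a third and applying the projection formula, to the identity $g_*\mathcal{O}_W(K_{W/Y}-\lfloor g^*\Delta\rfloor)=\mathcal{O}_Y$ for an effective simple normal crossing $\mathbb{Q}$-divisor $\Delta$ with coefficients in $[0,1)$, which in turn rests on the klt-type estimate $K_{W/Y}\geq\lfloor g^*\Delta\rfloor$; the vanishing of higher direct images is deduced from relative Kawamata--Viehweg vanishing after rewriting the sheaf as $\mathcal{O}_Y(K_Y+\lceil M\rceil)$. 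The discrepancy induction is compressed --- after each blow-up the running boundary can acquire negative coefficients, and one should note they stay $<1$ so the estimate propagates --- but this is the standard verification that an SNC pair with coefficients in $[0,1)$ is klt.

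One genuine gap to flag: the paper states the theorem for a \emph{family of closed subschemes} $D_j$, not only for divisors. In that generality $\pi^*(\alpha D)$ means $\sum_j\alpha_jF_j$, where $\mathcal{O}_Y(-F_j)$ is the invertible ideal sheaf $I_{D_j}\cdot\mathcal{O}_Y$, and $F_j$ is \emph{not} the pullback of a Cartier divisor on $X$ unless $D_j$ already is one. Your assertion that $M:=-\pi^*K_X-\pi^*(\alpha D)$ is $\pi$-numerically trivial ``because it is pulled back from $X$'' therefore fails in general: if $D_j$ is a point and $\pi$ blows it up, $F_j$ contains the exceptional divisor, which has nonzero self-intersection on fibres. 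The fix is standard and should be inserted: $\mathcal{O}_Y(-F_j)$ is a quotient of $\pi^*I_{D_j}$, hence $\pi$-globally generated, so $-F_j$ is $\pi$-free and in particular $\pi$-nef; adding the $\pi$-numerically trivial $-\pi^*K_X$ keeps $M$ $\pi$-nef, and $\pi$-bigness is still automatic because $\pi$ is birational. With that replacement your argument covers the full generality of the statement, and the independence half (where only the invertibility of $I_{D_j}\cdot\mathcal{O}_Y$ is used) already does.
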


For instance, when $X=\mathbb C^n$ and $D$ is defined by a monomial ideal $I$, by Howald \cite{Ho}, the multiplier ideal $\mathcal J(X,\alpha D)$ is the ideal generated by $x_1^{\gamma_1}\cdots x_n^{\gamma_n}$ for all $(r_1,\dots,r_n)\in \mathbb N^n$ such that $(\gamma_1+1,\dots,\gamma_n+1)$ is in the interior $\Int(\alpha \Gamma(I))$ of $\alpha \Gamma(I)$, where $\Gamma(I)$ is the Newton polyhedron of $I$. 

Now let $D$ be a closed subscheme of $X$. A {\it jumping number} of $D$ in $X$ is a number $\alpha\in\mathbb Q_{>0}$ such that $\mathcal J(X,\alpha D)\not= \mathcal J(X,(\alpha-\varepsilon)D)$ for all $\varepsilon >0$. The {\it log canonical threshold} $\lct(X,D)$ of $(X,D)$ is the smallest jumping number of $D$ in $X$. In \cite{Mus}, Mustata proves a formula of $\lct(X,D)$ in terms of the discrepancies and multiplicities of a log-resolution of $D$. To determine how a singular point affects a jumping number, Budur \cite{Bu1} introduces {\it inner jumping multiplicities}. By definition, the inner jumping multiplicity $m_{\alpha,\p}(D)$ of $\alpha$ at a closed point $\p\in D$ is the dimension of the complex vector space 
$$\mathcal K_{\p}(X,\alpha D):=\mathcal J(X,(\alpha-\varepsilon)D)/\mathcal J(X,(\alpha-\varepsilon)D+\delta\{\p\}),$$ 
for $0<\varepsilon\ll \delta \ll 1$. If $m_{\alpha,\p}(D)\not=0$, the number $\alpha$ is called an {\it inner jumping number} of $(X,D)$ at $\p$. It is proved by Budur in \cite[Proposition 2.8]{Bu1} that if $\alpha$ is an inner jumping number of $(X,D)$ at $\p$, for some $\p\in D$, then $\alpha$ is a jumping number of $(X,D)$. Furthermore, Budur can provide an explicit formula computing the number $m_{\alpha,\p}(D)$, which we recall as follows. Let $\pi: Y\to X$ be a log-resolution of the family $\{D,\{\p\}\}$, with $E=\pi^*(D)=\sum_{i\in A}N_iE_i$, $E_i$ irreducible components, and, for $d\in\mathbb N_{>0}$, let $J_{d,\p}:=\{i\in A \mid N_i\not=0,\ d|N_i,\ \pi(E_i)=\p\}$ and $E_{d,\p}:=\bigcup_{i\in J_{d,\p}}E_i$. 

\begin{proposition}[Budur \cite{Bu1}, Proposition 2.7]
Assume $\alpha=\frac k d$, with $k$ and $d$ coprime positive integers. Then $m_{\alpha,\p}(D)=\chi(Y,\mathcal O_{E_{d,\p}}(K_{Y/X}-\lfloor{(1-\varepsilon)\alpha\pi^*D}\rfloor))$, where $\chi$ is the sheaf Euler characteristic and $0<\varepsilon\ll 1$.
\end{proposition}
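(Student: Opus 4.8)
The plan is to realize $\mathcal K_{\p}(X,\alpha D)$ as the $\pi$-pushforward of a line bundle restricted to $E_{d,\p}$, and then to kill all higher direct images and higher cohomology by Lazarsfeld's local vanishing theorem, so that the dimension collapses to an Euler characteristic. Write $\pi^*\{\p\}=\sum_{i\in A}a_iE_i$; since $\pi$ is a log-resolution of the pair $\{D,\{\p\}\}$ one has $a_i>0$ exactly when $\pi(E_i)=\p$. Fix $0<\varepsilon\ll\delta\ll 1$. The key elementary observation is a comparison of round-downs: because $\gcd(k,d)=1$, the rational number $\alpha N_i=kN_i/d$ is an integer if and only if $d\mid N_i$. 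Hence for $i$ with $d\mid N_i$ and $N_i\neq 0$ one has $\lfloor(\alpha-\varepsilon)N_i\rfloor=\alpha N_i-1$, and since $\varepsilon N_i<\delta a_i<1$ the extra term raises the round-down, $\lfloor(\alpha-\varepsilon)N_i+\delta a_i\rfloor=\alpha N_i$; for every other $i$ (that is, $d\nmid N_i$, or $N_i=0$) the round-down is unaffected by subtracting $\varepsilon$ or adding $\delta a_i$ as long as $\varepsilon,\delta$ are small. Therefore
$$\lfloor\pi^*\!\left((\alpha-\varepsilon)D+\delta\{\p\}\right)\rfloor=\lfloor(\alpha-\varepsilon)\pi^*D\rfloor+E_{d,\p},\qquad\lfloor(\alpha-\varepsilon)\pi^*D\rfloor=\lfloor(1-\varepsilon)\alpha\pi^*D\rfloor .$$

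Set $L:=K_{Y/X}-\lfloor(1-\varepsilon)\alpha\pi^*D\rfloor$. By the definition of multiplier ideals via the log-resolution $\pi$ and the identity above, $\mathcal J(X,(\alpha-\varepsilon)D)=\pi_*\mathcal O_Y(L)$ and $\mathcal J(X,(\alpha-\varepsilon)D+\delta\{\p\})=\pi_*\mathcal O_Y(L-E_{d,\p})$. Tensoring the structure sequence $0\to\mathcal O_Y(-E_{d,\p})\to\mathcal O_Y\to\mathcal O_{E_{d,\p}}\to 0$ with the line bundle $\mathcal O_Y(L)$ gives
$$0\longrightarrow\mathcal O_Y(L-E_{d,\p})\longrightarrow\mathcal O_Y(L)\longrightarrow\mathcal O_{E_{d,\p}}(L)\longrightarrow 0,$$
and I apply $R\pi_*$. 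Both $L$ and $L-E_{d,\p}$ are of the form $K_{Y/X}-\lfloor\pi^*(\beta D')\rfloor$ for an effective $\mathbb Q$-divisor, so Lazarsfeld's local vanishing yields $R^i\pi_*\mathcal O_Y(L)=R^i\pi_*\mathcal O_Y(L-E_{d,\p})=0$ for $i\geq 1$. The long exact sequence of direct images therefore degenerates into a short exact sequence together with $R^i\pi_*\mathcal O_{E_{d,\p}}(L)=0$ for all $i\geq 1$, and its cokernel identity reads
$$\mathcal K_{\p}(X,\alpha D)=\pi_*\mathcal O_Y(L)\big/\pi_*\mathcal O_Y(L-E_{d,\p})\;\cong\;\pi_*\mathcal O_{E_{d,\p}}(L).$$

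Finally, $E_{d,\p}$ is proper and contracted by $\pi$ to the single point $\p$, so $\pi_*\mathcal O_{E_{d,\p}}(L)$ and the $R^i\pi_*\mathcal O_{E_{d,\p}}(L)$ are skyscraper sheaves at $\p$ with stalks $H^i(E_{d,\p},\mathcal O_{E_{d,\p}}(L))$. Since the higher ones vanish,
$$m_{\alpha,\p}(D)=\dim_{\mathbb C}\mathcal K_{\p}(X,\alpha D)=\dim_{\mathbb C}H^0(E_{d,\p},\mathcal O_{E_{d,\p}}(L))=\chi(E_{d,\p},\mathcal O_{E_{d,\p}}(L))=\chi\big(Y,\mathcal O_{E_{d,\p}}(L)\big),$$
the last equality holding because $\mathcal O_{E_{d,\p}}(L)$, regarded as a sheaf on $Y$, is supported on $E_{d,\p}$; this is the asserted formula. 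The step I expect to be most delicate is the round-down comparison of the first paragraph — keeping the two infinitesimals ordered as $\varepsilon\ll\delta$ and using $\gcd(k,d)=1$ to pin down exactly the indices $J_{d,\p}$ where the floor jumps — together with checking that local vanishing applies simultaneously to both multiplier ideals; once these are secured, the remainder is the standard exact-sequence argument.
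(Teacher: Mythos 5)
Your argument is correct and is essentially the standard proof of this statement from \cite{Bu1}: pass to the fixed log-resolution of $\{D,\{\p\}\}$, note via $\gcd(k,d)=1$ and the ordering $\varepsilon\ll\delta$ that the round-down jumps by exactly one on the components indexed by $J_{d,\p}$, and then apply local vanishing to the short exact sequence of $L-E_{d,\p}\hookrightarrow L$ to identify $\mathcal K_{\p}(X,\alpha D)$ with $\pi_*\mathcal O_{E_{d,\p}}(L)$ and collapse the Euler characteristic to $h^0$. The only minor imprecision is in the sentence handling the cases $d\mid N_i$, $N_i\neq 0$, $\pi(E_i)\neq\p$ (where $a_i=0$, so the floor stays at $\alpha N_i-1$), but your displayed formula $\lfloor\pi^*((\alpha-\varepsilon)D+\delta\{\p\})\rfloor=\lfloor(\alpha-\varepsilon)\pi^*D\rfloor+E_{d,\p}$ correctly encodes this case, so the proof stands.
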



\subsection{Hodge spectrum}
Let $X$ be a smooth complex variety of pure dimension $n$, let $f$ be a regular function on $X$ with zero locus $D\not=\emptyset$, and let $\p$ be a closed point in $D_{\red}$. Fixing a smooth metric on $X$ we may define a closed ball $B(\p,\varepsilon)$ around $\p$ in $X$ and a punctured closed disc $D_{\delta}^*$ around the origin of $\mathbb A_{\mathbb C}^1$. It is well known (cf. \cite{Mil}) that, for $0<\delta \ll\varepsilon \ll 1$, the map 
$$f: B(\p,\varepsilon)\cap f^{-1}(D_{\delta}^*)\to D_{\delta}^*$$ 
is a smooth locally trivial fibration, called Milnor fibration, whose diffeomorphism type is independent of such $\varepsilon$ and $\delta$. Denote the Milnor fiber $B(\p,\varepsilon)\cap f^{-1}(\delta)$ by $M_{\p}$, the geometric monodromy $M_{\p}\to M_{\p}$ and its cohomology level $H^*(M_{\p},\mathbb C)\to H^*(M_{\p},\mathbb C)$ by the same symbol $T$. 

Let $\HS_{\mathbb C}^{\mon}$ be the abelian category of complex mixed Hodge structures endowed with an automorphism of finite order. For an object $(H,T_H)$ of $\HS_{\mathbb C}^{\mon}$, one defines its Hodge spectrum as a fractional Laurent polynomial 
$$\hsp(H,T_H):=\sum_{\alpha\in \mathbb Q}n_{\alpha}t^{\alpha},$$
where $n_{\alpha}:=\dim_{\mathbb C}Gr_F^{\lfloor{\alpha}\rfloor}H_{e^{2\pi i\alpha}}$, $H_{e^{2\pi i\alpha}}$ is the eigenspace of $T_H$ with respect to the eigenvalue $e^{2\pi i\alpha}$, and $F$ is the Hodge filtration. By \cite{St} and \cite{Sa}, for any $l$, $H^l(M_{\p},\mathbb C)$ carries a canonical mixed Hodge structure, which is compatible with the semisimple part $T_s$ of $T$ so that $(H^l(M_{\p},\mathbb C),T_s)$ is an object of $\HS_{\mathbb C}^{\mon}$. As in \cite[Section 4.3]{DL} and \cite[Section 3]{Bu1}, we set
$$\hsp'(f,\p):=\sum_{j\in \mathbb Z}(-1)^j\hsp(\widetilde{H}^{n-1+j}(M_{\p},\mathbb C),T_s),$$
where we use the reduced cohomology $\widetilde H$ to present the vanishing cycle sheaf cohomology, since $\widetilde{H}^l(M_{\p},\mathbb C)_{e^{2\pi i\alpha}}=H^l(M_{\p},\mathbb C)_{e^{2\pi i\alpha}}$ if $l\not=0$ or $\alpha\not\in \mathbb Z$, $\widetilde{H}^0(M_{\p},\mathbb C)_1=\coker(H^0(\ast,\mathbb C)\to H^0(M_{\p},\mathbb C)_1)$ (cf. also \cite[Section 5.1]{BS1}). Then the Hodge spectrum of $f$ at $\p$, denoted by $\Sp(f,\p)$, is the following 
$$\Sp(f,\p)=t^n\iota(\hsp'(f,\p)),$$
where $\iota$ is given by $\iota(t^{\alpha})=t^{-\alpha}$. Writing $\Sp(f,\p)=\sum_{\alpha\in \mathbb Q}n_{\alpha,\p}(f)t^{\alpha}$ one calls the coefficients $n_{\alpha,\p}(f)$ the {\it spectrum multiplicities} of $f$ at $\p$. By \cite[Proposition 5.2]{BS1}, $n_{\alpha,\p}(f)=0$ if $\alpha$ is a rational number with $\alpha\leq 0$ or $\alpha\geq n$. Moreover, it implies from \cite[Corollary 2.3]{Bu4} that, for $\alpha\in (0,n)\cap \mathbb Q$, 
\begin{align}\label{Hodgemult}
n_{\alpha,\p}(f)=\sum_{j\in \mathbb Z}(-1)^j\dim_{\mathbb C}Gr_F^{\lfloor{n-\alpha}\rfloor}H^{n-1+j}(M_{\p},\mathbb C)_{e^{-2\pi i\alpha}}.
\end{align}
Specially, using \cite[Corollary 4.3.1]{DL} and important computations on multiplier ideals, Budur \cite{Bu1} proved the following result, which provides an effective way to compute $n_{\alpha,\p}(f)$, for $\alpha\in (0,1]\cap \mathbb Q$.

\begin{theorem}[Budur \cite{Bu1}]\label{BuCompare}
Let $X$ be a smooth quasi-projective complex variety, and D an effective integral divisor on $X$. Assume that $\p$ is a closed point of $D_{\red}$ and $f$ is any local equation of $D$ at $\p$. Then, for any $\alpha\in (0,1]\cap \mathbb Q$, $n_{\alpha,\p}(f)=m_{\alpha,\p}(D)$.
\end{theorem}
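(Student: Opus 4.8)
The plan is to express both sides through a single log-resolution $\pi\colon Y\to X$ of the family $\{D,\{\p\}\}$ and to match them term by term on the exceptional configuration over $\p$. Write $\alpha=k/d$ with $k,d$ coprime positive integers, and keep the notation $E=\pi^*D=\sum_{i\in A}N_iE_i$, $J_{d,\p}=\{i\in A\mid N_i\neq 0,\ d\mid N_i,\ \pi(E_i)=\p\}$ and $E_{d,\p}=\bigcup_{i\in J_{d,\p}}E_i$ from above. First I would normalize the Hodge side: since $\alpha\in(0,1]$ we have $n-\alpha\in[n-1,n)$, hence $\lfloor n-\alpha\rfloor=n-1$, so that \eqref{Hodgemult} becomes
$$n_{\alpha,\p}(f)=\sum_{j\in\mathbb Z}(-1)^j\dim_{\mathbb C}Gr_F^{n-1}H^{n-1+j}(M_{\p},\mathbb C)_{e^{-2\pi i\alpha}},$$
the reduced-cohomology subtlety (relevant only in cohomological degree $0$, hence only for $\alpha=1$) being treated directly. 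Thus the claim becomes: the alternating sum of the dimensions of the \emph{top} Hodge-graded pieces of the $e^{-2\pi i\alpha}$-eigenspace of the vanishing cohomology of $f$ at $\p$ equals the inner jumping multiplicity $m_{\alpha,\p}(D)$.

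Next I would evaluate each side as a sheaf Euler characteristic on $Y$. For the Hodge side, I would invoke \cite[Corollary 4.3.1]{DL}, which computes the equivariant Hodge--Deligne data of $M_{\p}$---equivalently $\hsp'(f,\p)$, and hence $\Sp(f,\p)$---in terms of $\pi$. The eigenvalue $e^{-2\pi i\alpha}$ receives contributions only from the components $E_i$ with $d\mid N_i$ and $\pi(E_i)=\p$, that is from $i\in J_{d,\p}$; and extracting the top Hodge degree $n-1$ annihilates all the $\Omega^p_{E_i}(\log)$-terms with $p>0$, leaving only the structure-sheaf contributions of the strata of $E_{d,\p}$, so that the coefficient of $t^{\alpha}$ in $\Sp(f,\p)$ is identified with $\chi\bigl(Y,\mathcal O_{E_{d,\p}}\otimes\mathscr M\bigr)$ for an explicit line bundle $\mathscr M$ built from $K_{Y/X}$ and the round-down of $\alpha\,\pi^*D$. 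On the multiplier-ideal side, the proposition of Budur recalled above (\cite[Proposition 2.7]{Bu1}) already gives $m_{\alpha,\p}(D)=\chi\bigl(Y,\mathcal O_{E_{d,\p}}(K_{Y/X}-\lfloor(1-\varepsilon)\alpha\,\pi^*D\rfloor)\bigr)$ for $0<\varepsilon\ll 1$. It then remains to identify $\mathscr M$ with $\mathcal O_{E_{d,\p}}\bigl(K_{Y/X}-\lfloor(1-\varepsilon)\alpha\,\pi^*D\rfloor\bigr)$, a component-by-component check: the coefficient of $E_i$ in $\lfloor(1-\varepsilon)\alpha\,\pi^*D\rfloor$ is $\lceil\alpha N_i\rceil-1$, which equals $\lfloor\alpha N_i\rfloor$ when $\alpha N_i\notin\mathbb Z$ and $\alpha N_i-1$ precisely on the components of $E_{d,\p}$, and one verifies that this matches the rounding produced by the $\tfrac1d$-shift in the eigensheaf decomposition underlying the Denef--Loeser formula.

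The main obstacle is exactly this matching: correctly isolating the $Gr_F^{n-1}$, eigenvalue-$e^{-2\pi i\alpha}$ part of the Denef--Loeser (equivalently, Steenbrink-spectral-sequence) formula and recognizing it, after the floor/ceiling bookkeeping on each $E_i$, as the very Euler characteristic over $E_{d,\p}$ produced by Budur's multiplier-ideal computation. Put differently, the real content is the coincidence of ``top Hodge piece of the vanishing cohomology'' with ``jump of the multiplier ideal governed by $K_{Y/X}-\lfloor\alpha\,\pi^*D\rfloor$'', and establishing it requires a careful pass through the combinatorics of the resolution; the normalization $\lfloor n-\alpha\rfloor=n-1$ from the first paragraph is what makes the two sides line up, and the argument breaks down once $\alpha>1$.
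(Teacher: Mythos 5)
This theorem is stated in the paper with attribution to Budur and no proof is given; the paper's only gloss is that Budur obtains it from the Denef--Loeser formula \cite[Corollary 4.3.1]{DL} together with computations on multiplier ideals. Your outline correctly reproduces the strategy of that cited proof: normalize $\lfloor n-\alpha\rfloor=n-1$ for $\alpha\in(0,1]$, realize $n_{\alpha,\p}(f)$ via Denef--Loeser as a sheaf Euler characteristic over the part $E_{d,\p}$ of the exceptional divisor, invoke \cite[Proposition~2.7]{Bu1} for $m_{\alpha,\p}(D)$ as $\chi\bigl(Y,\mathcal O_{E_{d,\p}}(K_{Y/X}-\lfloor(1-\varepsilon)\alpha\pi^*D\rfloor)\bigr)$, and match the two. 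Your floor/ceiling bookkeeping is also right, with one imprecision: the identity $\lfloor(1-\varepsilon)\alpha N_i\rfloor=\alpha N_i-1$ holds for \emph{every} $i$ with $d\mid N_i$, not only those with $\pi(E_i)=\p$; it is the restriction to $E_{d,\p}$ built into Budur's Euler characteristic (and, on the Denef--Loeser side, the localization of the nearby cycles at $\p$) that singles out the components over $\p$, not the rounding.

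The genuine gap is the step you yourself flag as ``the main obstacle.'' You assert that extracting $Gr_F^{n-1}$ of the eigenvalue-$e^{-2\pi i\alpha}$ part of the Denef--Loeser expression kills all $\Omega^p(\log)$-contributions with $p>0$ and leaves exactly $\chi\bigl(Y,\mathcal O_{E_{d,\p}}(K_{Y/X}-\lfloor(1-\varepsilon)\alpha\pi^*D\rfloor)\bigr)$, but this identification is precisely the content of Budur's theorem and is not automatic: one must pass from the motivic/Hodge--Deligne expression for $\hsp'(f,\p)$ (which involves the open strata $E_I^\circ$ with their $\hat\mu$-actions) to an Euler characteristic of a single line bundle on $E_{d,\p}$, and in doing so track the twist by $K_{Y/X}$, the discrepancy between $\lfloor\alpha\pi^*D\rfloor$ and $\lfloor(1-\varepsilon)\alpha\pi^*D\rfloor$ on the divisors with $d\mid N_i$, and the inclusion-exclusion over strata. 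As written, your proposal records the correct target identity and the correct ingredients, but the derivation that would turn it into a proof is replaced by the phrase ``requires a careful pass through the combinatorics of the resolution.'' So this is a faithful roadmap to the cited proof rather than a proof; if you intend to make it self-contained you need to actually carry out the stratified Euler-characteristic computation on the Denef--Loeser side and exhibit the resulting sheaf on $E_{d,\p}$ explicitly.
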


Remark from Theorem \ref{BuCompare} that, for $\alpha\in (0,1]$, $t^{\alpha}$ appears in $\Sp(f,\p)$ if and only if $\alpha$ is an inner jumping number of $(X,D)$ at $\p$. If $\p$ is an isolated singularity of $D$, Theorem \ref{BuCompare} may be even applied to the previous remark when replacing $X$ by an open neighborhood of $X$ to obtain Varchenko's result \cite{Va} (see Corollary in \cite[Section 1]{Bu1}).


\section{Local systems and Milnor fibers of homegeneous singularities}\label{Section3}
\subsection{Local systems and normal $G$-covers}\label{LSCC}
Let us recall some basic notions of local systems and cyclic covers in \cite{EV} and \cite{Bu3}. A complex {\it local system} $\mathcal V$ on a complex manifold is a locally constant sheaf of finite dimensional complex vector spaces. The rank of a locally constant sheaf is the dimension of a stalk as a complex vector space. As mentioned in Budur \cite{Bu3}, local systems of rank one on a complex manifold $U$ correspond to morphisms of groups $H_1(U)\to \mathbb C^*$. In this correspondence, a local system is called {\it unitary} if it is sent to a morphism of groups $H_1(U)\to S^1=\{\eta\in\mathbb C^*\mid |\eta|=1\}$. The constant sheaf $\mathbb C_U$ and any local system of rank one of finite order are simple examples of unitary local systems. 

Let $X$ be a smooth complex projective variety of dimension $n$, and $f$ a regular function on $X$ with zero divisor $D:=f^{-1}(0)$. Denote $U:=X\setminus D$ and write $D_{\red}=\bigcup_{j=1}^rD_j$, where $D_j$ are distinct irreducible reduced subvarieties of $D$. We may use $D$ as the family $\{D_1,\dots,D_r\}$ by abuse of notation (and the following definition will be in this sense), we write $c_1(\mathscr L)$ for the first Chern class of a line bundle $\mathscr L$ and consider the group
$$\Pic^{\tau}(X,D):=\big\lbrace(\mathscr L,\alpha)\in \Pic(X)\times [0,1)^r \mid c_1(\mathscr L)=\alpha [D]\in H^2(X,\mathbb R) \big\rbrace.$$
with the following operation
\begin{align}\label{lawgroup}
(\mathscr L,\alpha)\cdot (\mathscr L',\alpha'):=(\mathscr L\otimes\mathscr L'\otimes\mathcal O_X(-\lfloor (\alpha+\alpha')D\rfloor)),\{\alpha+\alpha'\}),
\end{align}
where $\alpha [D]:=\sum_{j=1}^r\alpha_j [D_j]$, an $\mathbb R$-linear combination of the cohomology classes $[D_j]$ in $H^2(X,\mathbb R)$, and as above $\alpha D:=\sum_{j=1}^r\alpha_j D_j$, $\lfloor \alpha \rfloor:=(\lfloor\alpha_1\rfloor,\dots,\lfloor\alpha_r\rfloor)$ and $\{\alpha\}:=\alpha-\lfloor \alpha \rfloor$. 

\begin{theorem}[Budur \cite{Bu3}, Theorem 1.2]\label{RH}
There is a canonical isomorphism of groups 
\begin{align*}
RH: \Pic^{\tau}(X,D)\cong \Hom(H_1(U),S^1),
\end{align*}
\end{theorem}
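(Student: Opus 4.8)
The plan is to construct the isomorphism $RH$ explicitly and check that it is a well-defined bijective group homomorphism. Starting from a pair $(\mathscr L,\alpha)\in\Pic^\tau(X,D)$, I would first choose, for each $j$, the section $s_j$ of $\mathcal O_X(D_j)$ cutting out $D_j$. The rational number vectors $\alpha\in[0,1)^r$ together with the condition $c_1(\mathscr L)=\sum_j\alpha_j[D_j]$ in $H^2(X,\mathbb R)$ forces $\mathscr L$ to be, up to numerical equivalence, an $\mathbb R$-combination of the $\mathcal O_X(D_j)$; the idea is to use this to produce a flat unitary connection on $\mathscr L|_U$ whose local monodromy around $D_j$ is multiplication by $e^{2\pi i\alpha_j}$. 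Concretely, one picks a Hermitian metric on $\mathscr L$, uses the $s_j$ to write a reference meromorphic connection with logarithmic poles along $D$ and residues $\alpha_j$, and then corrects it by the harmonic representative so the resulting connection is unitary and flat on $U$; its holonomy is the desired element of $\Hom(H_1(U),S^1)$. I would cite \cite{EV} for the cyclic-cover / logarithmic connection machinery that makes this precise.

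Next I would verify that this assignment is a group homomorphism, where the source carries the twisted operation \eqref{lawgroup}. The point is that when one multiplies $(\mathscr L,\alpha)\cdot(\mathscr L',\alpha')$, the naive sum of residues $\alpha+\alpha'$ may leave $[0,1)^r$, and the correction term $\mathcal O_X(-\lfloor(\alpha+\alpha')D\rfloor)$ precisely accounts for the integer shift: tensoring by $\mathcal O_X(-D_j)$ and changing the residue by $1$ give isomorphic flat bundles on $U$ (the extra section $s_j$ trivializes the difference on $U$). So on the level of holonomy representations $e^{2\pi i(\alpha_j+\alpha_j')}=e^{2\pi i\{\alpha_j+\alpha_j'\}}$, and the homomorphism property follows. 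Injectivity comes from the fact that a flat unitary line bundle on $U$ with trivial holonomy is trivial, and tracing back the trivialization shows $(\mathscr L,\alpha)$ must be the identity $(\mathcal O_X,0)$: the residues $\alpha_j$ are recovered as local monodromies, and once $\alpha=0$ the bundle $\mathscr L$ extends as a flat bundle on all of $X$, hence is trivial since $X$ is projective and the representation is trivial.

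For surjectivity I would take an arbitrary unitary character $\rho\colon H_1(U)\to S^1$, read off the local monodromies $e^{2\pi i\alpha_j}$ around each $D_j$ with $\alpha_j\in[0,1)$, and reconstruct $\mathscr L$ as the canonical Deligne extension of the flat bundle $(\mathcal V_\rho\otimes\mathcal O_U,\nabla)$ with the eigenvalue normalization dictated by the $\alpha_j$; one then checks $c_1$ of this extension equals $\sum_j\alpha_j[D_j]$ in $H^2(X,\mathbb R)$, using the residue formula for Chern classes of logarithmic connections (again \cite{EV}), so that $(\mathscr L,\alpha)\in\Pic^\tau(X,D)$ maps to $\rho$. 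The main obstacle, and the step deserving the most care, is the well-definedness of $\mathscr L$ together with the Chern class identity: one must check the Deligne extension does not depend on auxiliary choices (metric, harmonic correction) and that the real first Chern class is exactly the prescribed combination with no correction terms — this is where the normalization $\alpha\in[0,1)^r$ is essential and where the twisted group law on the source is forced upon us. Everything else is a bookkeeping exercise once this compatibility is in hand.
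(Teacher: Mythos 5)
This statement is quoted verbatim from Budur \cite{Bu3} (Theorem 1.2); the present paper cites it as a black box and supplies no proof, so there is no in-paper argument to compare your sketch against. Judged on its own terms, your outline follows the standard route (Deligne canonical extension with residues normalized in $[0,1)$, the Chern-class formula for logarithmic connections enforcing the $\Pic^\tau$ condition, the twisted group law \eqref{lawgroup} absorbing the integer part of $\alpha+\alpha'$ by tensoring with $\mathcal O_X(-D_j)$), and you rightly single out the $[0,1)$ normalization together with the Chern-class identity as where the real content lives.

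Two steps are looser than they should be before this counts as a proof. First, the ``harmonic representative'' framing is heavier than the rank-one case needs: the clean statement is that $c_1(\mathscr L)=\alpha[D]$ in $H^2(X,\mathbb R)$ makes the curvature of a metric connection on $\mathscr L$ cohomologous to $\sum_j\alpha_j$ times the curvature of $\mathcal O_X(D_j)$, and Hodge theory on the compact K\"ahler $X$ then flattens the difference by a unique imaginary exact $1$-form while preserving unitarity; the $\alpha_j$ reappear as residues after twisting by the $d\log s_j$. Second, your injectivity argument quietly assumes that $\alpha=0$ together with trivial holonomy on $U$ forces $\mathscr L\cong\mathcal O_X$, but $\alpha=0$ only tells you $c_1(\mathscr L)=0$ in $H^2(X,\mathbb R)$, i.e.\ $\mathscr L\in\Pic^\tau(X)$. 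You need in addition that the restriction map on unitary characters is injective, which follows because $D$ has real codimension two, so $H_1(U)\to H_1(X)$ is surjective; only then does trivial holonomy on $U$ propagate to trivial holonomy on $X$ and hence $\mathscr L\cong\mathcal O_X$. Neither gap is fatal, but neither is pure bookkeeping either.
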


By this, one may identify a unitary local system of rank one on $U$ with an element of $\Pic^{\tau}(X,D)$.

Let $\pi: Y\to X$ be a log-resolution of the family $\{D_1,\dots,D_r\}$, and $E:=Y\setminus \pi^{-1}(U)=\sum_{i\in A}N_iE_i$, with $E_j$ irreducible. We shall use the following two important results.

\begin{proposition}[Budur \cite{Bu3}, Proposition 3.3]\label{PicYX}
The map $\pi^*_{par}: \Pic^{\tau}(X,D)\to \Pic^{\tau}(Y,E)$ which sends $(\mathscr L,\alpha)$ to $(\pi^*\mathscr L-\lfloor{\beta E}\rfloor,\{\beta\})$ with $\beta$ defined by $\pi^*(\alpha D)=\beta E$ is an isomorphism of groups.
\end{proposition}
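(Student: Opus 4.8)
The plan is to check successively that $\pi^*_{par}$ is well defined (lands in $\Pic^{\tau}(Y,E)$), is a group homomorphism, is injective, and is surjective; only the last point is not purely formal. Throughout write $\pi^*D_j=\sum_{i\in A}m_{ij}E_i$, so the vector $\beta$ attached to $\alpha$ by $\pi^*(\alpha D)=\beta E$ is $\beta_i=\sum_j\alpha_j m_{ij}$, depending $\mathbb{Z}$-linearly (in the $m_{ij}$) on $\alpha$. For well-definedness, from $c_1(\mathscr L)=\alpha[D]$ in $H^2(X,\mathbb R)$ one gets $c_1(\pi^*\mathscr L)=\sum_i\beta_i[E_i]$, hence $c_1(\pi^*\mathscr L-\lfloor\beta E\rfloor)=\sum_i\{\beta_i\}[E_i]=\{\beta\}[E]$, and $\{\beta\}$ has coordinates in $[0,1)$, so the image pair lies in $\Pic^{\tau}(Y,E)$. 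For the homomorphism property one computes $\pi^*_{par}$ of the product \eqref{lawgroup}: since $\pi^*(\lfloor(\alpha+\alpha')D\rfloor)$ is an integral divisor supported on $E$, the verification reduces coordinatewise to the elementary identities $\{x+y\}=\{\{x\}+\{y\}\}$ and $\lfloor x\rfloor+\lfloor y\rfloor+\lfloor\{x\}+\{y\}\rfloor=\lfloor x+y\rfloor$; I regard this as routine bookkeeping.

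For injectivity, note that for each $j$ the strict transform $\widetilde D_j\subset Y$ of $D_j$ is a component of $E$ with multiplicity one in $\pi^*D_j$ and multiplicity zero in $\pi^*D_{j'}$ for $j'\neq j$ (a prime divisor dominating $D_j$ cannot be contained in $\pi^{-1}(D_{j'})$), so the $\widetilde D_j$-coordinate of $\beta$ is exactly $\alpha_j$. Thus if $\pi^*_{par}(\mathscr L,\alpha)$ is the identity, then $\{\beta\}=0$ forces $\alpha_j=\{\alpha_j\}=0$ for all $j$, whence $\pi^*\mathscr L\cong\mathcal O_Y$; as $\pi$ is a proper birational morphism of smooth (normal) varieties, $\pi_*\mathcal O_Y=\mathcal O_X$ and the projection formula give $\mathscr L\cong\mathcal O_X$.

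For surjectivity, take $(\mathscr M,\beta')\in\Pic^{\tau}(Y,E)$ and set $\alpha_j:=\beta'_{\widetilde D_j}\in[0,1)$. Using $[\widetilde D_j]=\pi^*[D_j]-\sum_F m_{Fj}[F]$, the sum over the exceptional prime components $F$ of $E$, the relation $c_1(\mathscr M)=\sum_i\beta'_i[E_i]$ rewrites in $H^2(Y,\mathbb R)$ as $c_1(\mathscr M)=\pi^*(\alpha[D])+\sum_F(\beta'_F-\beta_F)[F]$, where $\beta$ is attached to $\alpha$. Here one invokes the description of $H^2(Y,\mathbb Z)$ modulo torsion as $\pi^*H^2(X,\mathbb Z)\oplus\bigoplus_F\mathbb Z[F]$ for a resolution of a smooth projective variety: the exceptional component of the integral class $c_1(\mathscr M)$ is an integral combination of the $[F]$, so $\beta'_F-\beta_F\in\mathbb Z$ for every $F$, and together with the $\widetilde D_j$-coordinates this yields $\{\beta\}=\beta'$; moreover $\pi^*(\alpha[D])\in\pi^*H^2(X,\mathbb Z)$ and, $\pi^*$ being injective on $H^2(-,\mathbb R)$, the class $\alpha[D]$ is integral, so there is $\mathscr L\in\Pic(X)$ with $c_1(\mathscr L)=\alpha[D]$, i.e. $(\mathscr L,\alpha)\in\Pic^{\tau}(X,D)$. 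Now $\pi^*_{par}(\mathscr L,\alpha)$ and $(\mathscr M,\beta')$ have the same first Chern class and the same $[0,1)$-part, hence differ only by an element of $\Pic^{\tau}(Y)$ in the line-bundle slot; since $\pi^*\colon\Pic^{\tau}(X)\to\Pic^{\tau}(Y)$ is an isomorphism (the Picard variety is a birational invariant of smooth projective varieties, and the exceptional classes contribute no torsion to the N\'eron--Severi group), we absorb this by replacing $\mathscr L$ with $\mathscr L\otimes\mathscr N'$ for a suitable $\mathscr N'\in\Pic^{\tau}(X)$, after which $\pi^*_{par}(\mathscr L\otimes\mathscr N',\alpha)=(\mathscr M,\beta')$.

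The main obstacle is exactly this surjectivity step, whose essential content is the compatibility of the integral lattice $H^2(Y,\mathbb Z)$ with the splitting of $H^2(Y,\mathbb R)$ into the pullback part and the span of the exceptional classes, so that the $[0,1)$-weights forced by $\mathscr M$ really descend to weights on $(X,D)$. A more conceptual alternative, bypassing the explicit bookkeeping, is available once one grants Theorem \ref{RH}: since a log-resolution of $D$ is an isomorphism over $U$, restriction gives $H_1(\pi^{-1}(U))\cong H_1(U)$, and unwinding the construction of $RH$ shows that $RH_Y\circ\pi^*_{par}$ equals $RH_X$ followed by this restriction isomorphism --- the correction term $-\lfloor\beta E\rfloor$ being precisely the normalization that renders the pulled-back datum admissible on $(Y,E)$ without altering the underlying unitary rank-one local system on $\pi^{-1}(U)$ --- so that $\pi^*_{par}$ is an isomorphism because the other three maps in the square are.
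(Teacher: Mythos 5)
This proposition is cited in the paper as Budur \cite{Bu3}, Proposition~3.3, and the paper offers no proof of its own; there is therefore nothing to compare against except the attribution. What follows is an assessment of your blind proof on its own terms.

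Your direct argument is essentially correct and identifies the right structure. Well-definedness and the homomorphism property are indeed routine once one records, as you do, that $\pi^*\lfloor(\alpha+\alpha')D\rfloor$ is an integral divisor supported on $E$; this is what makes the coordinatewise identities $\{\{x\}+\{y\}\}=\{x+y\}$ and $\lfloor x\rfloor+\lfloor y\rfloor+\lfloor\{x\}+\{y\}\rfloor=\lfloor x+y\rfloor$ suffice, even though the naive $\beta$-vector attached to $\{\alpha+\alpha'\}$ is not $\{\beta+\beta'\}$. For injectivity, your observation that the strict transform $\widetilde D_j$ carries coefficient exactly $\alpha_j$ in $\beta$, so that $\{\beta\}=0$ forces $\alpha=0$ and then $\pi_*\mathcal O_Y=\mathcal O_X$ with the projection formula gives $\mathscr L\cong\mathcal O_X$, is the right idea. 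Surjectivity is, as you say, the only substantive step, and you invoke the correct ingredient: for a proper birational morphism $\pi\colon Y\to X$ between smooth projective varieties one has the canonical splitting $\Pic(Y)\cong\pi^*\Pic(X)\oplus\bigoplus_F\mathbb Z[F]$ over the exceptional prime divisors $F$. A slightly cleaner packaging than yours is to take $\mathscr L:=\pi_*\mathscr M$ directly (a line bundle on the smooth $X$), write $\mathscr M\cong\pi^*\mathscr L\otimes\mathcal O_Y(\sum_Fa_F F)$ from the splitting, and then read off $a_F=-\lfloor\beta_F\rfloor$ and $c_1(\mathscr L)=\alpha[D]$ from the $c_1$ computation; this sidesteps the final correction by an element of $\Pic^\tau(X)$, though your version with the correction is also valid.

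One caution on your proposed alternative via Theorem~\ref{RH}: in Budur's \cite{Bu3} the realization isomorphism $RH$ is itself established by reducing to the normal crossings case through a log-resolution, and Proposition~3.3 is part of that reduction. So while the commutativity of the square $RH_Y\circ\pi^*_{par}=\mathrm{res}\circ RH_X$ is true and illuminating, taking $RH$ for granted to deduce $\pi^*_{par}$ is an isomorphism risks circularity relative to the source you would be citing. It is fine as a sanity check, but the direct argument you give first is the one that stands on its own.
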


\begin{theorem}[Budur \cite{Bu2}, Theorem 4.6]\label{key}
Let $\mathcal V$ be a rank one unitary local system on $U$ which corresponds to $(\mathscr L,\alpha)\in \Pic^{\tau}(X,D)$. Then, for all $p, q\in\mathbb N$, we have
\begin{align*}
Gr_F^pH^{p+q}(U,\mathcal V^{\vee})=H^{n-q}(Y,\Omega_Y^p(\log E)^{\vee}\otimes \omega_Y\otimes \pi^*\mathscr L \otimes\mathcal O_Y(-\lfloor \pi^*(\alpha D) \rfloor))^{\vee}.
\end{align*}
In particular, 
$$Gr_F^0H^q(U,\mathcal V^{\vee})=H^{n-q}(X,\omega_X\otimes \mathscr L \otimes\mathcal J(X,\alpha D))^{\vee}.$$
\end{theorem}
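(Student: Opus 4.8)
The plan is to prove Theorem~\ref{key} by reducing to the case of a simple normal crossing divisor and then invoking the Hodge theory of the logarithmic de Rham complex attached to Deligne's canonical extension of a unitary local system.

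First I would reduce to the situation in which $D$ is already a reduced simple normal crossing divisor. Since $\pi\colon Y\to X$ restricts to an isomorphism over $U$, one has $H^{l}(U,\mathcal V^{\vee})\cong H^{l}(\pi^{-1}(U),(\pi^*\mathcal V)^{\vee})$ for all $l$, and by Proposition~\ref{PicYX} the pulled-back local system $\pi^*\mathcal V$ corresponds to $\pi^*_{par}(\mathscr L,\alpha)=(\pi^*\mathscr L\otimes\mathcal O_Y(-\lfloor\pi^*(\alpha D)\rfloor),\{\beta\})\in\Pic^{\tau}(Y,E)$, where $\pi^*(\alpha D)=\beta E$, so that all components of $\{\beta\}$ lie in $[0,1)$. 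It therefore suffices to establish the formula when $X=Y$, $D=E$ is a reduced SNC divisor, $\pi=\mathrm{id}$, and the exponent vector has all components in $[0,1)$ (so that the round-down term disappears); the statement of the theorem is then recovered by unwinding this identification. In this reduced situation, Serre duality on the smooth projective variety $Y$ rewrites the right-hand side as $H^q\!\big(Y,\Omega_Y^p(\log E)\otimes\widetilde{\mathcal W}\big)$, where $\mathcal W:=(\pi^*\mathcal V)^{\vee}$ and $\widetilde{\mathcal W}$ denotes Deligne's canonical extension of $\mathcal W$ with residue eigenvalues in $[0,1)$; indeed, the isomorphism $RH$ of Theorem~\ref{RH} (and, after pulling back, the map $\pi^*_{par}$) is constructed precisely so that the line bundle in a pair belonging to $\Pic^{\tau}$ is this canonical extension, up to the dual and the twist by $\omega_Y$ forced by Serre duality. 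So this step is a matter of bookkeeping once the conventions for canonical extensions are pinned down.

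The heart of the proof is then the Hodge theory of the twisted logarithmic de Rham complex $\Omega_Y^{\bullet}(\log E)\otimes\widetilde{\mathcal W}$, endowed with its integrable logarithmic connection. By Deligne's theory this complex is quasi-isomorphic to $Rj_*\mathcal W$, where $j\colon Y\setminus E\hookrightarrow Y$, so its hypercohomology computes $H^{\bullet}(Y\setminus E,\mathcal W)$; the Hodge filtration on the latter is the one induced by the stupid filtration of the complex, and the key point is the degeneration at $E_1$ of the resulting spectral sequence
$$E_1^{p,q}=H^q\!\big(Y,\Omega_Y^p(\log E)\otimes\widetilde{\mathcal W}\big)\ \Longrightarrow\ H^{p+q}(Y\setminus E,\mathcal W).$$
This degeneration holds because $\mathcal W$, being unitary, underlies a polarizable complex variation of Hodge structure of weight zero; it is Timmerscheidt's theorem (cf.\ the treatment of logarithmic de Rham complexes in Esnault--Viehweg), and it also follows from M.\ Saito's theory of mixed Hodge modules. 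Combining the degeneration with the rewriting of the previous paragraph gives $Gr_F^pH^{p+q}(U,\mathcal V^{\vee})=H^q(Y,\Omega_Y^p(\log E)\otimes\widetilde{\mathcal W})$, which is the first assertion.

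For the ``in particular'' statement one sets $p=0$: then $\Omega_Y^0(\log E)=\mathcal O_Y$ and $Gr_F^0H^q(U,\mathcal V^{\vee})=H^{n-q}\!\big(Y,\omega_Y\otimes\pi^*\mathscr L\otimes\mathcal O_Y(-\lfloor\pi^*(\alpha D)\rfloor)\big)^{\vee}$. Writing $\omega_Y=\pi^*\omega_X\otimes\mathcal O_Y(K_{Y/X})$ and applying the projection formula gives $R^i\pi_*\big(\omega_Y\otimes\pi^*\mathscr L\otimes\mathcal O_Y(-\lfloor\pi^*(\alpha D)\rfloor)\big)=\omega_X\otimes\mathscr L\otimes R^i\pi_*\mathcal O_Y(K_{Y/X}-\lfloor\pi^*(\alpha D)\rfloor)$, which for $i=0$ equals $\omega_X\otimes\mathscr L\otimes\mathcal J(X,\alpha D)$ by the definition of the multiplier ideal and for $i>0$ vanishes by the local vanishing statement in Lazarsfeld's theorem quoted above. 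The Leray spectral sequence then identifies $H^{n-q}(Y,\cdot)$ with $H^{n-q}(X,\omega_X\otimes\mathscr L\otimes\mathcal J(X,\alpha D))$, which is the second formula. The only genuinely non-formal ingredient in the whole argument is the $E_1$-degeneration of the Hodge-to-de-Rham spectral sequence for the twisted logarithmic de Rham complex; everything else---the reduction to the SNC case, the Serre-duality bookkeeping accounting for the various $(\,\cdot\,)^{\vee}$ and the factor $\omega_Y$, and the descent from $Y$ back to $X$ via local vanishing---amounts to carefully tracking the conventions for canonical extensions and the group law on $\Pic^{\tau}$.
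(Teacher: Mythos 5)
The paper quotes this from Budur's lecture notes \cite{Bu2} without reproving it, so there is no in-paper proof to compare against. Your argument is correct and, as far as I can tell, follows the same route Budur takes: reduce to the normal-crossing case by pulling back along $\pi$ and using $\pi^*_{par}$ (so the round-down disappears), apply Serre duality to rewrite the right-hand side as $H^q$ of the twisted logarithmic de Rham sheaf $\Omega_Y^p(\log E)\otimes\widetilde{\mathcal W}$ with $\widetilde{\mathcal W}$ the Deligne canonical extension of $\mathcal V^\vee$, invoke the Timmerscheidt/Esnault--Viehweg $E_1$-degeneration for unitary local systems to identify this with $Gr_F^p H^{p+q}(U,\mathcal V^\vee)$, and obtain the $p=0$ formula via the projection formula, the definition of the multiplier ideal, local vanishing $R^i\pi_*\mathcal O_Y(K_{Y/X}-\lfloor\pi^*(\alpha D)\rfloor)=0$ for $i>0$, and Leray. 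The one place that is more than bookkeeping and that you flag but leave implicit is the identification of (the inverse of) the line bundle of $\pi^*_{par}(\mathscr L,\alpha)$ with the Deligne extension of $\mathcal V^\vee$ having residues in the right interval; mismatched residue conventions are the usual source of errors here, so a complete write-up should pin that down against the construction of $RH$ in \cite{Bu3}.
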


Let $G$ be a finite group. By \cite[Corollary 1.10]{Bu3}, the dual group $G^*=\Hom(G,\mathbb C^*)$ of $G$ gives rise to a normal $G$-cover of $X$ unramified above $U$. Namely, the normal $G$-cover of $X$ is the morphism
$$\phi: \widetilde X=\Spec_{\mathcal O_X}\left(\bigoplus_{\eta\in G^*}\mathscr L_{\eta}^{-1}\right)\to X$$
induced by the $\mathcal O_X$-module structural morphisms $\mathcal O_X\to \mathscr L_{\eta}$, for all $\eta\in G^*$, where we identify $G^*$ with the subgroup $\{(L_{\eta},\alpha_{\eta}) \mid \eta\in G^*\}$ of $\Pic^{\tau}(X,D)$. The group $G$ acts on $\mathscr L_{\eta}^{-1}$ via the character $\eta$, hence acts on the $\mathcal O_X$-module sheaf $\phi_*\mathcal O_{\widetilde X}$. By \cite[Corollary 1.11]{Bu3}, $\phi_*\mathcal O_{\widetilde X}$ admits an eigensheaf decomposition 
\begin{align}\label{decomp1}
\phi_*\mathcal O_{\widetilde X}=\bigoplus_{\eta\in G^*}\mathscr L_{\eta}^{-1},
\end{align}
where the eigensheaf $\mathscr L_{\eta}^{-1}$ is with respect to the eigenvalue $\eta$ of the action of $G$ on $\phi_*\mathcal O_{\widetilde X}$. 

Now we consider the log-resolution $\pi$. By Proposition \ref{PicYX}, since $\{(\mathscr L_{\eta},\alpha_{\eta}) \mid \eta\in G^*\}$ is a finite subgroup of $\Pic^{\tau}(X,D)$, $\{(\pi^*\mathscr L_{\eta}-\lfloor{\beta_{\eta} E}\rfloor,\beta_{\eta}) \mid \eta\in G^*\}$, with $\beta_{\eta}$ defined by $\pi^*(\alpha_{\eta} D)=\beta_{\eta} E$, is a finite subgroup of $\Pic^{\tau}(Y,E)$. By the same way as previous we can construct the corresponding normal $G$-cover of $Y$ unramified above $\pi^{-1}(U)\cong U$ as follows
$$\rho: \widetilde Y=\Spec_{\mathcal O_Y}\left(\bigoplus_{\eta\in G^*}\pi^*\mathscr L_{\eta}^{-1}\otimes \mathcal O_Y\left(\lfloor{\beta_{\eta}E}\rfloor\right)\right)\to Y,$$
where the group $G$ of acts on $\widetilde Y$ and on $\rho_*\mathcal O_{\widetilde Y}$. Moreover, similarly as (\ref{decomp1}), we have

\begin{proposition}[Budur \cite{Bu3}, Corollary 1.12]\label{decomp2}
There is an eigensheaf decomposition 
\begin{align*}
\rho_*\mathcal O_{\widetilde Y}=\bigoplus_{\eta\in G^*}\pi^*\mathscr L_{\eta}^{-1}\otimes \mathcal O_Y\left(\lfloor{\beta_{\eta}E}\rfloor\right),
\end{align*}
the eigensheaf $\pi^*\mathscr L_{\eta}^{-1}\otimes \mathcal O_Y\left(\lfloor{\beta_{\eta}E}\rfloor\right)$ is with respect to the eigenvalue $\eta$ of the action of $G$ on $\rho_*\mathcal O_{\widetilde Y}$. 
\end{proposition}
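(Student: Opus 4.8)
The plan is to obtain the statement as a direct application of Budur's normal $G$-cover construction \cite[Corollaries 1.10--1.12]{Bu3}, now performed on the pair $(Y,E)$ instead of $(X,D)$; in fact the assertion is precisely \cite[Corollary 1.12]{Bu3} once the relevant finite subgroup of $\Pic^{\tau}(Y,E)$ has been pinned down. First I would note that, by Proposition \ref{PicYX}, $\pi^*_{par}$ is an isomorphism of groups, so the image $\pi^*_{par}(G^*)$ of the finite subgroup $\{(\mathscr{L}_{\eta},\alpha_{\eta})\mid\eta\in G^*\}\subset\Pic^{\tau}(X,D)$ is again a finite subgroup of $\Pic^{\tau}(Y,E)$, canonically isomorphic to $G^*$ (hence to $\Hom(G,\mathbb C^*)$) via $\eta\mapsto(\pi^*\mathscr{L}_{\eta}\otimes\mathcal O_Y(-\lfloor\beta_{\eta}E\rfloor),\{\beta_{\eta}\})$, where $\beta_{\eta}$ is determined by $\pi^*(\alpha_{\eta}D)=\beta_{\eta}E$ and therefore $\lfloor\pi^*(\alpha_{\eta}D)\rfloor=\lfloor\beta_{\eta}E\rfloor$.

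Next I would run the $G$-cover construction of \cite[Corollary 1.10]{Bu3} for $(Y,E)$ and this subgroup: the multiplication maps of $\Pic^{\tau}(Y,E)$, governed by the group law \eqref{lawgroup} with its round-down correction, make $\bigoplus_{\eta\in G^*}\pi^*\mathscr{L}_{\eta}^{-1}\otimes\mathcal O_Y(\lfloor\beta_{\eta}E\rfloor)$ — the direct sum of the inverses of the line-bundle components of the elements of $\pi^*_{par}(G^*)$ — into a sheaf of $\mathcal O_Y$-algebras carrying a $G$-action through the characters $\eta$. By definition $\widetilde Y=\Spec_{\mathcal O_Y}$ of this algebra, so it is a normal $G$-cover $\rho\colon\widetilde Y\to Y$ unramified over $\pi^{-1}(U)\cong U$, the branch locus being contained in $E_{\red}$. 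The eigensheaf decomposition then follows from \cite[Corollary 1.11]{Bu3} applied in this setting: $\rho_*\mathcal O_{\widetilde Y}=\bigoplus_{\eta\in G^*}\pi^*\mathscr{L}_{\eta}^{-1}\otimes\mathcal O_Y(\lfloor\beta_{\eta}E\rfloor)$, with the $\eta$-indexed summand being the eigensheaf for the eigenvalue $\eta$ of the $G$-action. This is exactly the claimed formula.

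The only delicate point is the book-keeping that translates the two group laws into the compatibility of the tensor maps $\mathscr{L}_{\eta_1}^{-1}\otimes\mathscr{L}_{\eta_2}^{-1}\to\mathscr{L}_{\eta_1\eta_2}^{-1}$ on $Y$ (with their round-down corrections): one must see that these maps are associative and $G$-equivariant, so that the $\mathcal O_Y$-algebra structure on $\rho_*\mathcal O_{\widetilde Y}$ genuinely exists. This is subsumed in \cite[Corollary 1.10]{Bu3}, so nothing new is needed; the residual work is the routine check that the coordinate with entries in $[0,1)$ is $\{\beta_{\eta}\}$ rather than $\beta_{\eta}$, which is why writing $\beta_{\eta}$ for the second coordinate in the construction of $\rho$ above is a harmless abuse of notation. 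I expect no essential obstacle beyond this bookkeeping: the whole purpose of Proposition \ref{PicYX} together with the corollaries of \cite{Bu3} is precisely to make such a transfer mechanical.
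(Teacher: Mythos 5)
Your proposal is correct and follows essentially the same route as the paper, which itself gives no independent argument but sets up $\rho$ via Proposition \ref{PicYX} and then cites \cite[Corollaries 1.10--1.12]{Bu3} applied to $(Y,E)$ and the finite subgroup $\pi^*_{par}(G^*)\subset\Pic^{\tau}(Y,E)$, exactly as you do. Your extra remark that the second coordinate should be $\{\beta_\eta\}\in[0,1)^{|A|}$ rather than $\beta_\eta$ is a correct bit of bookkeeping that the paper also glosses over; it does not change the substance.
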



\subsection{Milnor fibers of homegeneous singularity}\label{MF}
Let $f(x_0,\dots,x_n)\in \mathbb C[x_0,\dots,x_n]$ be a homogeneous polynomial of degree $d$. We shall take $f$ into two closely interactive entities, a Milnor fiber at the origin of $\mathbb C^{n+1}$ and a complex projective hypersurface of $\mathbb P^n$. By \cite[Lemma 9.4]{Mil}, the Minor fiber $M$ of $f$ at the origin of $\mathbb C^{n+1}$ is diffeomorphic to $\{(x_0,\dots,x_n)\in \mathbb C^{n+1}\mid f(x_0,\dots,x_n)=1\}$. The geometric monodromy $M\to M$ is given by multiplication of elements of $M$ by $e^{\frac{2\pi i} d}$, which induces an endomorphism $T$ of the complex vector space $H^*(M,\mathbb C)$. 

Following \cite[Section 4]{Bu4}, we consider the smooth complex projective variety $X=\mathbb P^n$ and the closed subscheme $D$ of $X$ defined by the zero locus of $f$. Put $U:=X\setminus D$. Since the action of $\mathbb Z/d \mathbb Z$ on $M$ is free, we have a natural isomorphism $M/(\mathbb Z/d \mathbb Z)\cong U$. Denote by $\sigma$ the quotient map $M\to U$, which is the cyclic cover of degree $d$ of $U$. Then there is an eigensheaf decomposition of the $\mathcal O_U$-module sheaf $\sigma_*\mathbb C_M$ as follows
$$\sigma_*\mathbb C_M=\bigoplus_{k=0}^{d-1}\mathcal V_k,$$ 
where $\mathcal V_k$ is the unitary local system on $U$ given by the eigensheaf of $T$ with respect to the eigenvalue $e^{-\frac{2\pi ik} d}$. This implies that 
$$H^l(U,\sigma_*\mathbb C_M)=\bigoplus_{k=0}^{d-1}H^l(U,\mathcal V_k).$$
Let us consider the Leray spectral sequence
$$E_2^{p,q}=H^q(U,R^p\sigma_*\mathbb C_M)\Rightarrow H^{p+q}(M,\mathbb C_M).$$
Since $\sigma$ is a finite morphism of schemes, $R^p\sigma_*\mathbb C_M=0$ for all $p\geq 1$, hence, by this spectral sequence, we have $H^l(U,\sigma_*\mathbb C_M)= H^l(M,\mathbb C_M)= H^l(M,\mathbb C)$, for $l\in \mathbb N$. This implies the following important lemma (cf. \cite[Section 4]{Bu4}). 

\begin{lemma}[Budur \cite{Bu4}]\label{eigenvector}
The complex vector space $H^l(U,\mathcal V_k)$ if nontrivial is the eigenspace of the monodromy action $T$ on $H^l(M,\mathbb C)$ with respect to the eigenvalue $e^{-\frac{2\pi ik} d}$, that is, 
$$H^l(M,\mathbb C)_{e^{-\frac{2\pi ik} d}}=H^l(U,\mathcal V_k),$$ 
for $0\leq k\leq d-1$ and $l\geq 0$.
\end{lemma}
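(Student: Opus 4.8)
The plan is to extract Lemma~\ref{eigenvector} as a purely formal consequence of the eigensheaf decomposition of $\sigma_*\mathbb C_M$ together with the degeneration of the Leray spectral sequence, which has essentially already been set up in the discussion preceding the statement. First I would record that $\sigma: M\to U$ is a finite (in fact finite \'etale of degree $d$) morphism, so that $R^p\sigma_*\mathbb C_M=0$ for all $p\ge 1$; this makes the Leray spectral sequence $E_2^{p,q}=H^q(U,R^p\sigma_*\mathbb C_M)\Rightarrow H^{p+q}(M,\mathbb C)$ collapse at the $E_2$-page, yielding a natural isomorphism $H^l(U,\sigma_*\mathbb C_M)\cong H^l(M,\mathbb C)$ for every $l\ge 0$. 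I would emphasize that this isomorphism is $\mathbb Z/d\mathbb Z$-equivariant, the group acting on the left through its action on the sheaf $\sigma_*\mathbb C_M$ (functorial pushforward along the deck transformations) and on the right through the geometric monodromy $T$; equivariance holds because the identification is induced by the adjunction $\mathbb C_M\to \sigma^*\sigma_*\mathbb C_M$, which is compatible with the group action on both sides.

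Next I would take $G$-invariants, or rather decompose into isotypical components. Since $\mathbb Z/d\mathbb Z$ is a finite abelian group and we are working with $\mathbb C$-coefficients, taking cohomology commutes with the eigensheaf decomposition: from $\sigma_*\mathbb C_M=\bigoplus_{k=0}^{d-1}\mathcal V_k$, where $\mathcal V_k$ is the subsheaf on which a chosen generator of $\mathbb Z/d\mathbb Z$ acts by $e^{-2\pi i k/d}$, one obtains $H^l(U,\sigma_*\mathbb C_M)=\bigoplus_{k=0}^{d-1}H^l(U,\mathcal V_k)$, and this is a decomposition into eigenspaces for the induced $\mathbb Z/d\mathbb Z$-action. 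Matching this with the equivariant isomorphism $H^l(U,\sigma_*\mathbb C_M)\cong H^l(M,\mathbb C)$ identifies $H^l(U,\mathcal V_k)$ with the $e^{-2\pi i k/d}$-eigenspace $H^l(M,\mathbb C)_{e^{-2\pi i k/d}}$, which is exactly the claimed equality. The clause "if nontrivial" in the statement is automatically absorbed: either the eigenspace is zero, in which case both sides vanish, or it is not, in which case they coincide.

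The only genuine point requiring care---and the step I expect to be the main obstacle---is the equivariance of the Leray isomorphism and the compatibility of the two descriptions of the $\mathbb Z/d\mathbb Z$-action: namely that the monodromy $T$ on $H^*(M,\mathbb C)$, defined geometrically as the map induced by $x\mapsto e^{2\pi i/d}x$, agrees under the Leray collapse with the action coming from functoriality of $\sigma_*$ applied to the deck group. This is where one must be slightly attentive to signs and conventions: the deck transformation of the cover $\sigma$ corresponding to multiplication by $e^{2\pi i/d}$ on $M$ is precisely the geometric monodromy, so the two actions agree on the nose, but one should check that the eigenvalue bookkeeping (whether $\mathcal V_k$ corresponds to $e^{2\pi i k/d}$ or $e^{-2\pi i k/d}$) is consistent with the normalization chosen in \cite{Bu4}; here it has been fixed so that $\mathcal V_k$ is the $e^{-2\pi i k/d}$-eigensheaf, matching the stated conclusion. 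Everything else is formal: finiteness of $\sigma$ kills the higher direct images, $\mathbb C$-coefficients and finiteness of the group make the isotypical decomposition exact, and the result follows. I would therefore present the proof in three short moves: (1) collapse of the Leray spectral sequence giving $H^l(U,\sigma_*\mathbb C_M)\cong H^l(M,\mathbb C)$; (2) observe this is $\mathbb Z/d\mathbb Z$-equivariant with $T$ on the right; (3) compare eigenspace decompositions on both sides.
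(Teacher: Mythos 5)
Your proposal is correct and reproduces the paper's own argument: the paper likewise deduces the lemma from the eigensheaf decomposition $\sigma_*\mathbb C_M=\bigoplus_{k=0}^{d-1}\mathcal V_k$ together with the collapse of the Leray spectral sequence for the finite cover $\sigma\colon M\to U$, which gives $H^l(U,\sigma_*\mathbb C_M)\cong H^l(M,\mathbb C)$. Your additional care in verifying that this identification is $\mathbb Z/d\mathbb Z$-equivariant, and that the deck-group action matches the geometric monodromy $T$, makes explicit a compatibility the paper leaves implicit.
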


In fact, there are two commuting monodromy actions on $H^l(M,\mathbb C)$, where the endomorphism $T$ is the first one. The second one is, for each $k$, the monodromy of $\mathcal V_k$ around a generic point of $D_j$, which, by \cite[Lemma 4.1]{Bu4}, is given by multiplication by $e^{\frac{2\pi ikm_j} d}$. Together with \cite[Proposition 3.3]{Bu3}, it proves the following important lemma. 

\begin{lemma}[Budur \cite{Bu4}, Lemma 4.2]\label{keylemma1}
Assume $D=\sum_{j=1}^rm_jD_j$, with $D_j$ irreducible of degree $d_j$. Then the element in $\Pic^{\tau}(X,D)$ corresponding via the isomorphism $RH$ in Theorem \ref{RH} to the unitary local system $\mathcal V_k$ is $(\mathcal O_{\mathbb P^n}(\sum_{j=1}^r\{\frac{km_j}{d}\}d_j), (\{\frac{km_1}{d}\},\dots,\{\frac{km_r}{d}\}))$. 
\end{lemma}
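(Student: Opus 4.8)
The plan is to identify the rank-one unitary local system $\mathcal V_k$ inside $\Pic^{\tau}(X,D)$ by pinning down two pieces of data that together determine such an element: its underlying line bundle (or rather its first Chern class) and the vector of boundary weights $\alpha\in[0,1)^r$. By Theorem \ref{RH}, an element of $\Pic^{\tau}(X,D)$ is the same as a group homomorphism $H_1(U)\to S^1$, and since $H_1(U)$ is generated by the meridian loops $\gamma_j$ around the components $D_j$ (with the single relation $\sum_j m_j\gamma_j=0$ coming from $D=\sum_j m_j D_j$ being the divisor of a global function of degree $d$), it suffices to compute the monodromy of $\mathcal V_k$ around each $D_j$.

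First I would recall from Lemma \ref{eigenvector} that $\mathcal V_k$ is the $e^{-2\pi ik/d}$-eigensheaf of the deck transformation $T$ of the cyclic cover $\sigma:M\to U$, and then invoke \cite[Lemma 4.1]{Bu4} (quoted in the paragraph preceding the statement): the second monodromy, namely the monodromy of $\mathcal V_k$ around a generic point of $D_j$, is multiplication by $e^{2\pi i k m_j/d}$. Under the correspondence of Theorem \ref{RH}, a unitary local system whose meridian monodromies are $e^{2\pi i k m_j/d}$ has weight vector $\alpha=(\{\tfrac{km_1}{d}\},\dots,\{\tfrac{km_r}{d}\})$, because $\{km_j/d\}$ is the unique representative in $[0,1)$ of $km_j/d$ modulo $1$. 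This fixes the second coordinate of the sought element.

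Next I would determine the line bundle. The defining constraint of $\Pic^{\tau}(X,D)$ forces $c_1(\mathscr L)=\alpha[D]=\sum_j\{\tfrac{km_j}{d}\}[D_j]$ in $H^2(\mathbb P^n,\mathbb R)$; since $[D_j]=d_j\cdot h$ where $h$ is the hyperplane class and $\Pic(\mathbb P^n)=\mathbb Z\cdot\mathcal O(1)$, the only line bundle with the right first Chern class that can occur is $\mathcal O_{\mathbb P^n}(\sum_j\{\tfrac{km_j}{d}\}d_j)$ — and one should note that $\sum_j\{\tfrac{km_j}{d}\}d_j$ is indeed an integer, which follows from $\sum_j m_j d_j=d$ (the degree of $f$) together with $\sum_j \tfrac{km_j}{d}d_j=k\in\mathbb Z$, so that $\sum_j\{\tfrac{km_j}{d}\}d_j=k-\sum_j\lfloor\tfrac{km_j}{d}\rfloor d_j\in\mathbb Z$. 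Here is where I would cite Proposition \ref{PicYX}, or rather the structure of $\Pic^{\tau}$: once the weight vector $\alpha$ is fixed, the subgroup of $\Pic^{\tau}(X,D)$ lying over it is a torsor under $\{(\mathscr M,0):c_1(\mathscr M)=0\}$, which for $X=\mathbb P^n$ is trivial, so $(\mathscr L,\alpha)$ is uniquely determined by $\alpha$ alone. Thus the pair $(\mathcal O_{\mathbb P^n}(\sum_j\{\tfrac{km_j}{d}\}d_j),(\{\tfrac{km_1}{d}\},\dots,\{\tfrac{km_r}{d}\}))$ is the element of $\Pic^{\tau}(X,D)$ corresponding to $\mathcal V_k$.

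The main obstacle, and the step that genuinely requires care rather than bookkeeping, is justifying that the meridian monodromy of $\mathcal V_k$ around $D_j$ is exactly $e^{2\pi ikm_j/d}$ and not, say, its inverse or some Galois conjugate — that is, getting the sign and the normalization consistent with the conventions fixed in Theorem \ref{RH} and in the eigensheaf decomposition $\sigma_*\mathbb C_M=\bigoplus_k\mathcal V_k$. This is precisely the content of \cite[Lemma 4.1]{Bu4}, so in the write-up I would quote it and then simply feed the output through the dictionary of Theorem \ref{RH}; the rigidity of $\Pic(\mathbb P^n)$ then does the rest with no further geometry needed. The only other point worth a sentence is the integrality check above, which guarantees the line bundle is honestly defined.
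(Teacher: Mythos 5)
Your proof follows essentially the same strategy the paper sketches (and which the paper itself lifts from Budur's \cite{Bu4}): feed the meridian monodromies furnished by \cite[Lemma 4.1]{Bu4} through the dictionary of Theorem \ref{RH} to recover the weight vector $\alpha_j=\{km_j/d\}$, then observe that the line bundle is forced by the constraint $c_1(\mathscr L)=\alpha[D]$. Where the paper simply points at \cite[Proposition 3.3]{Bu3} for the remaining bookkeeping, you replace that reference with the cleaner and more direct observation that the fiber of $\Pic^{\tau}(\mathbb P^n,D)\to[0,1)^r$ over a given $\alpha$ is a torsor under $\{(\mathscr M,0):c_1(\mathscr M)=0\}$, which is trivial since $c_1:\Pic(\mathbb P^n)\to H^2(\mathbb P^n,\mathbb R)$ is injective; this is a genuine improvement in transparency, and you also supply the integrality check of $\sum_j\{km_j/d\}d_j$ (which the paper verifies separately, just after the lemma). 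Your honest flagging of the sign/normalization issue in the $RH$ dictionary as the one non-formal input, deferred to \cite[Lemma 4.1]{Bu4}, is exactly the right call.

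One small but real error: the relation in $H_1(U,\mathbb Z)$ among the meridians $\gamma_j$ is $\sum_{j=1}^r d_j\gamma_j=0$, not $\sum_{j=1}^r m_j\gamma_j=0$. It comes from intersecting a generic line $L\subset\mathbb P^n$ with $D_{\red}$ (so the coefficients are the degrees $d_j=L\cdot D_j$), not from the multiplicities of the components of $D$. The multiplicities $m_j$ enter only through the monodromy eigenvalues $e^{2\pi ikm_j/d}$, which is a separate ingredient. This slip is harmless for your argument -- you only use the relation to assert that the meridians generate $H_1(U)$, and the compatibility you actually need (namely that $\sum_j d_j\{km_j/d\}\in\mathbb Z$, which is precisely what the correct relation demands of a well-defined character $H_1(U)\to S^1$) you do verify in your integrality step -- but the stated relation should be corrected.
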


Notice that $\sum_{j=1}^r\{\frac{km_j}{d}\}d_j$ is an integer, because, if for every $1\leq j\leq r$ we write $km_j=dn_j+s_j$, with $n_j, s_j\in \mathbb N$, $0\leq s_j<d$, we have
\begin{align*}
\sum_{j=1}^r\left\{\frac{km_j}{d}\right\}d_j=\sum_{j=1}^r\frac{s_jd_j}{d}=\sum_{j=1}^r\frac{km_jd_j-dn_jd_j}{d}=k-\sum_{j=1}^rn_jd_j.
\end{align*}

Fix a log-resolution $\pi: Y\to \mathbb P^n$ of the family of closed subschemes $\{D_1,\dots, D_r\}$ of $\mathbb P^n$, and, as previous, denote $E=\pi^*(\bigcup_{j=1}^rD_j)=\sum_{j\in A}N_jE_j$, with $E_j$ irreducible components of $\pi^{-1}(D)$. Let 
\begin{align}\label{Lk}
\mathscr L^{(k)}:=\pi^*\mathcal O_{\mathbb P^n}\left(\sum_{j=1}^r\left\{\frac{km_j}{d}\right\}d_j\right) \otimes \mathcal O_Y\left(-\left\lfloor{\sum_{j=1}^r\left\{\frac{km_j}{d}\right\}}\pi^*D_j\right\rfloor\right).
\end{align}
Denote by $B$ the set of integers $k$ such that $0\leq k\leq d-1$ and $d$ divides $km_j$ for all $1\leq j\leq r$, and by $\overline{B}$ the complement of $B$ in $[0,d-1]\cap\mathbb Z$. 

\begin{remark}\label{Remark1}
If $k$ is in $B$, then $\mathscr L^{(k)}=\mathcal O_Y$. Furthermore, if $k$ is in $B$ and $k\not=0$, so is $d-k$; if $k$ and $k'$ are in $B$, so is either $k+k'$ or $k+k'-d$; hence we can consider $B$ as a subgroup of $\mathbb Z/d\mathbb Z$. Let $m=\gcd(m_1,\dots,m_r)$, and $u_j\in \mathbb N_{>0}$ with $m_j=mu_j$ for $1\leq j\leq r$. Then $k\in B$ if and only if $0\leq k\leq d-1$ and $ku_s$ is divisible by $\sum_{j=1}^rd_ju_j$ for any $1\leq s\leq r$. Since $u_1,\dots,u_r$ are coprime, the latter means that $k$ is divisible by $\sum_{j=1}^rd_ju_j$, hence the cardinal $|B|$ of $B$ equals $m$. 
\end{remark}

For simplicity of notation, from now on, if $\mathscr A$ is a sheaf on $\mathbb P^n$, and $l\in\mathbb Z$, we shall write $\mathscr A(l)$ in stead of $\mathscr A\otimes \mathcal O_{\mathbb P^n}(l)$. 


\begin{proposition}\label{dimGr}
With the notation as in Lemma \ref{keylemma1} we have
\begin{itemize}
\item[(i)] $\dim_{\mathbb C}Gr_F^pH^{p+q}(U,\mathcal V_k)=\dim_{\mathbb C}H^q(Y,\Omega_Y^p(\log E))$, for $k\in B$;
\item[(ii)] $\dim_{\mathbb C}Gr_F^pH^{p+q}(U,\mathcal V_{d-k})=\dim_{\mathbb C}H^q(Y,\Omega_Y^p(\log E)\otimes {\mathscr L^{(k)}}^{-1})$, for $k\in\overline{B}$.  
\end{itemize}
In particular, for $k\in\overline{B}$,

$\dim_{\mathbb C} Gr_F^0H^q(U,\mathcal V_{d-k})=\dim_{\mathbb C}H^{n-q}\left(\mathbb P^n,\mathcal J\left(\mathbb P^n,\sum_{j=1}^r\left\{\frac{km_j}{d}\right\}D_j\right)\left(\sum_{j=1}^r\left\{\frac{km_j}{d}\right\}d_j-n-1\right)\right).$
\end{proposition}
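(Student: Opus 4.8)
The plan is to deduce the whole proposition from Budur's Theorem~\ref{key}, applied with $\mathcal V=\mathcal V_k$, after identifying $\mathcal V_k^{\vee}$ with $\mathcal V_{d-k}$, and then to transport the resulting cohomology from $Y$ back to $\mathbb P^n$ by Serre duality. First I would check that $\mathcal V_k^{\vee}=\mathcal V_{d-k}$: both are rank one unitary local systems on $U=\mathbb P^n\setminus D$, and by \cite[Lemma~4.1]{Bu4} the monodromy of $\mathcal V_k$ around a generic point of $D_j$ equals $e^{2\pi ikm_j/d}$, so that of the dual $\mathcal V_k^{\vee}$ equals $e^{-2\pi ikm_j/d}=e^{2\pi i(d-k)m_j/d}$, the monodromy of $\mathcal V_{d-k}$; as the meridians of the $D_j$ generate $H_1(U)$, a rank one local system on $U$ is pinned down by these local monodromies, whence the claim. (Alternatively this follows from the group law~(\ref{lawgroup}) on $\Pic^{\tau}(\mathbb P^n,D)$ together with Lemma~\ref{keylemma1}, using that $\{km_j/d\}+\{-km_j/d\}\in\{0,1\}$.) By Lemma~\ref{keylemma1}, $\mathcal V_k$ corresponds to
\begin{align*}
(\mathscr L,\alpha):=\Big(\mathcal O_{\mathbb P^n}\big(\textstyle\sum_{j=1}^r\{km_j/d\}\,d_j\big),\ (\{km_1/d\},\dots,\{km_r/d\})\Big)\in\Pic^{\tau}(\mathbb P^n,D).
\end{align*}

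Feeding $(\mathscr L,\alpha)$ into Theorem~\ref{key} with $X=\mathbb P^n$, its left-hand side becomes $Gr_F^pH^{p+q}(U,\mathcal V_k^{\vee})=Gr_F^pH^{p+q}(U,\mathcal V_{d-k})$, while on the right-hand side $\pi^{*}(\alpha D)=\sum_{j=1}^r\{km_j/d\}\,\pi^{*}D_j$ as $\mathbb Q$-divisors, so $\pi^{*}\mathscr L\otimes\mathcal O_Y(-\lfloor\pi^{*}(\alpha D)\rfloor)$ is precisely the line bundle $\mathscr L^{(k)}$ of~(\ref{Lk}); thus
\begin{align*}
Gr_F^pH^{p+q}(U,\mathcal V_{d-k})=H^{n-q}\big(Y,\ \Omega_Y^p(\log E)^{\vee}\otimes\omega_Y\otimes\mathscr L^{(k)}\big)^{\vee}.
\end{align*}
Now Serre duality on the smooth projective $n$-fold $Y$, i.e.\ $H^{n-q}(Y,\mathcal G)^{\vee}\cong H^q(Y,\mathcal G^{\vee}\otimes\omega_Y)$ for locally free $\mathcal G$, applied to $\mathcal G=\Omega_Y^p(\log E)^{\vee}\otimes\omega_Y\otimes\mathscr L^{(k)}$, makes the two copies of $\omega_Y$ cancel and gives $H^q(Y,\Omega_Y^p(\log E)\otimes{\mathscr L^{(k)}}^{-1})$; taking $\dim_{\mathbb C}$ proves (ii). For (i), which is the case $k\in B$, one has $\{km_j/d\}=0$ for every $j$, hence $\mathcal V_k$ corresponds to $(\mathcal O_{\mathbb P^n},0)$ and $\mathscr L^{(k)}=\mathcal O_Y$ (Remark~\ref{Remark1}); since $(\mathcal O_{\mathbb P^n},0)$ is the neutral element, $\mathcal V_k=\mathbb C_U=\mathcal V_k^{\vee}$, and the same computation (now with $\pi^{*}\mathscr L=\mathcal O_Y$ and $\lfloor\pi^{*}(\alpha D)\rfloor=0$) gives $Gr_F^pH^{p+q}(U,\mathcal V_k)\cong H^q(Y,\Omega_Y^p(\log E))$.

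For the last ("in particular") assertion, with $k\in\overline B$, I would take $p=0$, so $\Omega_Y^0(\log E)=\mathcal O_Y$, and use the second formula of Theorem~\ref{key} in place of the Serre-duality step: $Gr_F^0H^q(U,\mathcal V_{d-k})=H^{n-q}(\mathbb P^n,\omega_{\mathbb P^n}\otimes\mathscr L\otimes\mathcal J(\mathbb P^n,\alpha D))^{\vee}$. Substituting $\omega_{\mathbb P^n}=\mathcal O_{\mathbb P^n}(-n-1)$, $\mathscr L=\mathcal O_{\mathbb P^n}(\sum_j\{km_j/d\}\,d_j)$ and $\alpha D=\sum_j\{km_j/d\}\,D_j$, and using the shorthand $\mathscr A(l)=\mathscr A\otimes\mathcal O_{\mathbb P^n}(l)$, this right-hand side reads $H^{n-q}\big(\mathbb P^n,\ \mathcal J(\mathbb P^n,\sum_j\{km_j/d\}\,D_j)(\sum_j\{km_j/d\}\,d_j-n-1)\big)^{\vee}$, and taking $\dim_{\mathbb C}$ yields the claimed equality. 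The only step here that is not a pure formality is the first one, identifying $\mathcal V_k^{\vee}$ with $\mathcal V_{d-k}$ and the class of $\mathcal V_k$ in $\Pic^{\tau}(\mathbb P^n,D)$; I expect the only genuine (minor) care to be needed there, in the conventions for fractional parts and round-down in~(\ref{lawgroup}) (notably the dichotomy between components $D_j$ with $d\mid km_j$ and those with $d\nmid km_j$), and in recalling that $\mathcal J(\mathbb P^n,\alpha D)$ ignores the components with $\alpha_j=0$. Everything afterwards is substitution into Theorem~\ref{key}, Serre duality, and twisting bookkeeping.
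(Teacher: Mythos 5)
Your proposal is correct and follows essentially the same route as the paper: identify $\mathcal V_k^{\vee}$ with $\mathcal V_{d-k}$ (and with $\mathcal V_0=\mathbb C_U$ for $k\in B$) via Lemma~\ref{keylemma1} and the group law~(\ref{lawgroup}), feed the corresponding element of $\Pic^{\tau}(\mathbb P^n,D)$ into Theorem~\ref{key}, apply Serre duality on $Y$ for (i)--(ii), and use the second formula of Theorem~\ref{key} together with the definition of the multiplier ideal for the ``in particular'' statement. The only difference is expository: you spell out the verification $\pi^{*}\mathscr L\otimes\mathcal O_Y(-\lfloor\pi^{*}(\alpha D)\rfloor)=\mathscr L^{(k)}$ and offer a monodromy-based alternative check of $\mathcal V_k^{\vee}=\mathcal V_{d-k}$, both of which the paper leaves implicit.
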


\begin{proof}
Due to the group law (\ref{lawgroup}) of $\Pic^{\tau}(X,D)$ and definition of $\mathcal V_k$, it is obvious that $\mathcal V_k=\mathcal V_k^{\vee}=\mathcal V_0$ for $k\in B$, and that $\mathcal V_{d-k}=\mathcal V_k^{\vee}$ for $k\in\overline{B}$. Then, by Lemma \ref{keylemma1} and Theorem \ref{key}, we have
$$Gr_F^pH^{p+q}(U,\mathcal V_k)=H^{n-q}(Y,\Omega_Y^p(\log E)^{\vee}\otimes \omega_Y)^{\vee}$$ 
for $k\in B$, and 
\begin{align*}
Gr_F^pH^{p+q}(U,\mathcal V_{d-k})&=H^{n-q}(Y,\Omega_Y^p(\log E)^{\vee}\otimes \omega_Y\otimes \mathscr L^{(k)})^{\vee}\\
&=H^{n-q}(Y,(\Omega_Y^p(\log E) \otimes {\mathscr L^{(k)}}^{-1})^{\vee}\otimes \omega_Y)^{\vee},
\end{align*}
for $k\in\overline{B}$. Applying the Serre duality we obtain (i) and (ii).

For the rest statement, we again apply Lemma \ref{keylemma1} and the particular case in Theorem \ref{key}, together with the definition of multiplier ideal.
\end{proof}

Denote $\mathscr L_{\red}^{(k)}:=\pi^*\mathcal O_{\mathbb P^n}(k)\otimes \mathcal O_Y(-\lfloor\frac{k}{d}E\rfloor)$, for $0\leq k\leq d-1$. 

\begin{corollary}\label{Cor3.9}
With the notation as in Lemma \ref{keylemma1} and $D$ being reduced, for $1\leq k\leq d$,
\begin{itemize}
\item[(i)] $\dim_{\mathbb C}Gr_F^pH^{p+q}(U,\mathcal V_{d-k})=\dim_{\mathbb C}H^q(Y,\Omega_Y^p(\log E)\otimes {\mathscr L_{\red}^{(k)}}^{-1})$;
\item[(ii)] $\dim_{\mathbb C} Gr_F^0H^q(U,\mathcal V_{d-k})=\dim_{\mathbb C}H^{n-q}(\mathbb P^n,\mathcal J(\mathbb P^n,\frac{k}{d}D)(k-n-1))$.
\end{itemize}
\end{corollary}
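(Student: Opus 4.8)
The plan is to derive Corollary \ref{Cor3.9} as the special case of Proposition \ref{dimGr} in which the divisor $D$ is reduced, i.e. all multiplicities $m_j=1$ and $d$ is the common degree. First I would observe that when $D$ is reduced we have $m_j=1$ for every $j$, so $\gcd(m_1,\dots,m_r)=1$ and Remark \ref{Remark1} gives $|B|=1$, i.e. $B=\{0\}$ and $\overline{B}=\{1,\dots,d-1\}\cap\mathbb Z$; thus the relevant range $1\leq k\leq d$ (where $k=d$ plays the role of $k=0\in B$, for which $\mathscr L^{(0)}=\mathcal O_Y$ and $\mathscr L^{(k)}_{\red}=\mathcal O_Y$ as well) is exactly covered by part (ii) of Proposition \ref{dimGr} together with the trivial case $k\in B$ from part (i).

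Next I would simplify the line bundle $\mathscr L^{(k)}$ of (\ref{Lk}) under the hypothesis $m_j=1$. The exponent $\{\frac{km_j}{d}\}$ becomes $\{\frac{k}{d}\}=\frac{k}{d}$ for $1\leq k\leq d-1$, so $\sum_{j=1}^r\{\frac{km_j}{d}\}d_j=\frac{k}{d}\sum_j d_j=\frac{k}{d}\cdot d=k$ (using $\sum_j d_j=d$ because $D=\sum_j D_j$ is reduced of degree $d$), and $\sum_{j=1}^r\{\frac{km_j}{d}\}\pi^*D_j=\frac{k}{d}\sum_j\pi^*D_j=\frac{k}{d}\pi^*D=\frac{k}{d}E$. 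Hence $\mathscr L^{(k)}=\pi^*\mathcal O_{\mathbb P^n}(k)\otimes\mathcal O_Y(-\lfloor\frac{k}{d}E\rfloor)=\mathscr L^{(k)}_{\red}$, which is precisely the sheaf named just before the corollary. Substituting this identification into Proposition \ref{dimGr}(ii) yields statement (i) of the corollary verbatim. For statement (ii), I would likewise substitute $\sum_j\{\frac{km_j}{d}\}D_j=\frac{k}{d}D$ and $\sum_j\{\frac{km_j}{d}\}d_j=k$ into the ``in particular'' clause of Proposition \ref{dimGr}, giving $\dim_{\mathbb C}Gr_F^0H^q(U,\mathcal V_{d-k})=\dim_{\mathbb C}H^{n-q}(\mathbb P^n,\mathcal J(\mathbb P^n,\frac{k}{d}D)(k-n-1))$, which is (ii).

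Finally I should check the boundary value $k=d$: here $d-k=0$, $\mathcal V_0=\mathbb C_U$, and the formulas should reduce to the $k\in B$ case. One has $\mathscr L^{(d)}_{\red}=\pi^*\mathcal O_{\mathbb P^n}(d)\otimes\mathcal O_Y(-\lfloor E\rfloor)=\pi^*\mathcal O_{\mathbb P^n}(d)\otimes\mathcal O_Y(-E)=\mathcal O_Y$, since $\pi^*\mathcal O_{\mathbb P^n}(d)=\mathcal O_Y(\pi^*D)=\mathcal O_Y(E)$; and $\mathcal J(\mathbb P^n,D)(d-n-1)=\mathcal O_{\mathbb P^n}(d-n-1)=\omega_{\mathbb P^n}^{-1}(-2)\cdots$, more directly $\mathcal J(\mathbb P^n,D)=\mathcal O_{\mathbb P^n}$ when $\lfloor\pi^*D\rfloor=E=\pi^*(\det\Jac_\pi)$-adjusted so that $K_{Y/\mathbb P^n}-\lfloor\pi^*D\rfloor$ pushes forward to $\mathcal O_{\mathbb P^n}$ — this is the standard fact that $\lct(\mathbb P^n,D)\geq 1$ fails in general, so I would instead simply note that at $k=d$ the statement coincides with Proposition \ref{dimGr}(i) applied with $k=0\in B$, giving $\dim_{\mathbb C}Gr_F^pH^{p+q}(U,\mathcal V_0)=\dim_{\mathbb C}H^q(Y,\Omega_Y^p(\log E))$, consistent with (i) since $\mathscr L^{(d)}_{\red}=\mathcal O_Y$. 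The only mild subtlety — and the one place to be careful — is this bookkeeping at the two ends of the range $1\leq k\leq d$, reconciling the index shift $k\leftrightarrow d-k$ with the convention $k=d\equiv 0$; everything else is a direct substitution into Proposition \ref{dimGr}, so there is no real obstacle.
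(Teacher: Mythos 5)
Your argument is the same as the paper's: specialize Proposition \ref{dimGr} to $m_1=\cdots=m_r=1$, note that $B=\{0\}$ and $\overline{B}=\{1,\dots,d-1\}$, and identify $\mathscr L^{(k)}$ with $\mathscr L^{(k)}_{\red}$. The paper states this in one line, while you spell out the substitutions and the $k=d$ boundary check; the extra detail is correct and harmless, though the digression about $\lct$ in the last paragraph can simply be dropped.
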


\begin{proof}
Applying Proposition \ref{dimGr} to the special case $m_1=\cdots=m_r=1$ we obtain the statements. Note that, in this case, $B=\{0\}$ and $\overline{B}=\{1,\dots,d-1\}$.
\end{proof}

\begin{lemma}\label{dimHi}
With the notation as in Lemma \ref{keylemma1}, and by observation $\mathscr L^{(d)}=\mathscr L^{(0)}$, we have
\begin{itemize}
\item[(i)] $\dim_{\mathbb C}H^1(U,\mathcal V_k)=r-1$, if $n=2$ and $k\in B$;
\item[(ii)] $\dim_{\mathbb C}H^l(U,\mathcal V_{d-k})=\sum_{p\geq 0}\dim_{\mathbb C}H^{j-p}(Y,\Omega_Y^p(\log E)\otimes {\mathscr L^{(k)}}^{-1})$, if $j\geq 0$ and $1\leq k\leq d$.
\end{itemize}
\end{lemma}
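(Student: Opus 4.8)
The plan is to deduce both parts from the cohomological spectral-sequence machinery already recalled, specializing Proposition~\ref{dimGr} and the Hodge-to-de~Rham degeneration. For part (ii), observe that $H^l(U,\mathcal V_{d-k})$ carries a mixed Hodge structure whose associated graded pieces are computed by Proposition~\ref{dimGr}(ii): writing $H^l(U,\mathcal V_{d-k})=\bigoplus_{p+q=l}Gr_F^pH^{p+q}(U,\mathcal V_{d-k})$, a purely formal consequence of the existence of the Hodge filtration, and then substituting $\dim_{\mathbb C}Gr_F^pH^{p+q}(U,\mathcal V_{d-k})=\dim_{\mathbb C}H^q(Y,\Omega_Y^p(\log E)\otimes{\mathscr L^{(k)}}^{-1})$, gives immediately
\begin{align*}
\dim_{\mathbb C}H^l(U,\mathcal V_{d-k})=\sum_{p+q=l}\dim_{\mathbb C}H^q(Y,\Omega_Y^p(\log E)\otimes{\mathscr L^{(k)}}^{-1})=\sum_{p\geq 0}\dim_{\mathbb C}H^{l-p}(Y,\Omega_Y^p(\log E)\otimes{\mathscr L^{(k)}}^{-1}).
\end{align*}
The case $k=d$ is handled by the stated observation $\mathscr L^{(d)}=\mathscr L^{(0)}=\mathcal O_Y$, so the formula reads the same as the $k\in B$, $k=0$ instance; one should note here that $\mathcal V_{d-d}=\mathcal V_0$, consistent with $\mathcal V_d:=\mathcal V_0$ by periodicity in $k$ modulo $d$.

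For part (i), with $n=2$ and $k\in B$, we have $\mathscr L^{(k)}=\mathcal O_Y$ by Remark~\ref{Remark1}, hence $\mathcal V_k=\mathcal V_0=\mathbb C_U$ by the argument at the start of the proof of Proposition~\ref{dimGr}. So we must compute $\dim_{\mathbb C}H^1(U,\mathbb C)$ for $U=\mathbb P^2\setminus D_{\red}$, where $D_{\red}$ is a curve with $r$ irreducible components. The standard approach is to use the Gysin/residue exact sequence, or equivalently the fact that for a smooth projective surface minus a (possibly singular) curve $D_{\red}=\bigcup_{j=1}^r C_j$, the first Betti number of the complement is $2q+(r-1)$ where $q$ is the irregularity; since $q(\mathbb P^2)=0$ and $b_1(\mathbb P^2)=0$, this gives $b_1(U)=r-1$. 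Concretely I would invoke Proposition~\ref{dimGr}(i) with $p+q=1$: $H^1(U,\mathbb C)=Gr_F^0H^1\oplus Gr_F^1H^1$, with $\dim Gr_F^0H^1(U,\mathbb C)=\dim H^1(Y,\mathcal O_Y)=h^1(\mathbb P^2,\mathcal O)=0$ (since $\pi$ is birational onto a rational smooth surface, $H^1(Y,\mathcal O_Y)=0$), and $\dim Gr_F^1H^1(U,\mathbb C)=\dim H^0(Y,\Omega_Y^1(\log E))$. The latter is the space of closed logarithmic $1$-forms on $Y$ with poles along $E=\pi^{-1}(D_{\red})$; since $H^0(Y,\Omega_Y^1)=H^0(\mathbb P^2,\Omega^1)=0$, this space has dimension equal to the number of independent residue conditions, namely $(\text{number of irreducible components of }E)-(\text{rank of the map to }H^2)$, which after the standard computation collapses to $r-1$ — the residues along the $r$ components of the strict transform, constrained by the single relation that the total residue divisor be rationally trivial.

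The main obstacle is part (i): making the count $h^0(Y,\Omega_Y^1(\log E))=r-1$ rigorous requires either a clean appeal to Deligne's theory of the weight filtration on $H^1(U)$ (where $Gr^W_1H^1(U)=H^1(\overline{U})=0$ for $\overline{U}$ rational and $Gr^W_2H^1(U)=\ker\big(\bigoplus_{j}\mathbb C(-1)\xrightarrow{\text{classes}}H^2(\mathbb P^2)\big)$, whose dimension is $r-1$ because the $r$ classes $[C_j]$ span a $1$-dimensional subspace of $H^2(\mathbb P^2,\mathbb C)$ and satisfy no further relation when taken with $\mathbb Q$-coefficients), or a direct log-de~Rham computation that is bookkeeping-heavy because $E$ may have many exceptional components whose residues must be shown to cancel. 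I would take the weight-filtration route: it is the shortest, uses only that $\mathbb P^2$ is rational with $b_1=0$ and $\mathrm{Pic}(\mathbb P^2)=\mathbb Z$, and gives $\dim H^1(U,\mathcal V_k)=\dim Gr^W_2H^1(U,\mathbb C)=r-1$ directly. Part (ii) presents no obstacle beyond correctly matching indices and invoking the already-established Proposition~\ref{dimGr}(ii) together with the trivial remark that the MHS on $H^l(U,\mathcal V_{d-k})$ has its Hodge filtration summing to the full space.
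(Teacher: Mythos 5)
Your treatment of part (ii) is exactly the paper's: one simply sums over $p$ the formula of Proposition~\ref{dimGr}(ii) for the Hodge pieces, so there is nothing more to say there. For part (i) you consider the same reduction the paper uses (Proposition~\ref{dimGr}(i) with $p+q=1$, giving $\dim H^1(U,\mathcal V_k)=h^1(Y,\mathcal O_Y)+h^0(Y,\Omega_Y^1(\log E))$ and $h^1(Y,\mathcal O_Y)=0$ for the rational surface $Y$), but then diverge on the remaining count: the paper simply cites the proof of Th\'eor\`eme~6 in Esnault \cite{Esn} for $h^0(Y,\Omega_Y^1(\log E))=r-1$, whereas you compute $\dim_{\mathbb C}H^1(U,\mathbb C)=r-1$ directly from Deligne's weight filtration via the residue (Gysin) sequence. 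Both are valid; yours is more self-contained and closer to first principles, while the paper's is terser by outsourcing the log-form count. One small point to tighten: the formula $Gr^W_2H^1(U)\cong\ker\bigl(\bigoplus\mathbb C(-1)\to H^2(\text{compactification})\bigr)$ requires a smooth compactification by a normal crossing divisor, and $D_{\red}\subset\mathbb P^2$ need not be normal crossing, so you should run the residue sequence on $(Y,E)$ rather than on $(\mathbb P^2,D_{\red})$. After doing so the sum is over all irreducible components of $E$ (the $r$ strict transforms plus the exceptional curves), mapping to $H^2(Y)$; since the exceptional classes are part of a basis of $H^2(Y)$ they contribute injectively, and one still lands on kernel dimension $r-1$, but this extra verification is needed to make the argument precise. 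Finally, a notational nit: $W_1H^1(U)$ is the image of $H^1(Y)$ in $H^1(U)$ (not literally $H^1(\overline U)$ itself), though of course both vanish here.
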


\begin{proof}
By Proposition \ref{dimGr} (i), if $k\in B$, we have $\dim_{\mathbb C}Gr_F^pH^{p+q}(U,\mathcal V_k)=\dim_{\mathbb C}H^q(Y,\Omega_Y^p(\log E))$, thus 
$$\dim_{\mathbb C}H^1(U,\mathcal V_0)=\dim_{\mathbb C}H^1(Y,\mathcal O_Y)+\dim_{\mathbb C}H^0(Y,\Omega_Y^1(\log E)).$$
Assume that $n=2$. It is a fact that $\dim_{\mathbb C}H^1(Y,\mathcal O_Y)=0$, because $Y$ is birationally equivalent to $\mathbb P^2$, and that $\dim_{\mathbb C}H^0(Y,\Omega_Y^1(\log E))=r-1$, due to the proof of Th\'eor\`eme 6 in \cite{Esn}. This proves (i). 

The statement (ii) of this lemma is a consequence of Proposition \ref{dimGr} (ii). 
\end{proof}

\section{Monodromy characteristic polynomials and zeta function}
As in Section \ref{MF}, we shall work with a homogeneous polynomial $f(x_0,\dots,x_n)\in\mathbb C[x_0,\dots,x_n]$ of degree $d$. By considering its germ at the origin of $\mathbb C^{n+1}$ we study some singularity invariants, including the characteristic polynomials and the zeta function of the monodromy.

\subsection{Characteristic polynomials}
Recall that the Milnor fiber $M$ of the singularity $f(x_0,\dots,x_n)$ at the origin of $\mathbb C^{n+1}$ is diffeomorphic to $\{(x_0,\dots,x_n)\in\mathbb C^{n+1} \mid f(x_0,\dots,x_n)=1\}$, and the monodromy $T$ is induced by $e^{\frac{2\pi i} d}\cdot(x_0,\dots,x_n)=(e^{\frac{2\pi i} d} x_0,\dots,e^{\frac{2\pi i} d} x_n)$. By definition, the (monodromy) characteristic polynomial $\Delta_l(t)$ of the endomorphism $T|_{H^l(M,\mathbb C)}$ of $H^l(M,\mathbb C)$ is the monic polynomial 
$$\Delta_l(t)=\det(t\Id-T|_{H^l(M,\mathbb C)}).$$
Assume that 
$$f(x_0,\dots,x_n)=\prod_{j=1}^rf_j(x_0,\dots,x_n)^{m_j},$$ 
where $f_j(x_0,\dots,x_n)$ are distinct irreducible homogeneous polynomials of degree $d_j$, $1\leq j\leq r$. As above, we denote by $D_j$ the complex projective plane curve $\{(x_0:\dots:x_n)\in\mathbb P^n \mid f_j(x_0,\dots,x_n)=0\}$, for $1\leq j\leq r$.

Fix a log-resolution $\pi: Y\to \mathbb P^n$ of the family $D=\{D_1,\dots, D_r\}$, with normal crossing divisor $E=\pi^{-1}(\bigcup_{j=1}^rD_j)$. As mentioned in Section \ref{Section3}, there is an isomorphism $M/(\mathbb Z/d\mathbb Z)\cong U=\mathbb P^n\setminus D$ so that the  canonical projection $\sigma: M\to U$ induces an eigensheaf decomposition $\sigma_*\mathbb C_M=\bigoplus_{k=0}^{d-1}\mathcal V_k$, where $\mathcal V_k$ are the unitary local systems on $U$ given in Lemma \ref{keylemma1}. By Lemma \ref{eigenvector}, for $1\leq k\leq d$ and $l\in  \mathbb N$, the vector space $H^l(U,\mathcal V_{d-k})$ if nontrivial is the eigenspace of $T|_{H^l(M,\mathbb C)}$ with respect to the eigenvalue $e^{\frac{2\pi ik} d}$. This together with Lemma \ref{dimHi} and Remark \ref{Remark1} proves the following lemma. 

\begin{lemma}\label{Alexander1}
Let $\Delta_l(t)$ be the characteristic polynomial of the endomorphism $T|_{H^l(M,\mathbb C)}$ of $H^l(M,\mathbb C)$. Then, with the previous notation and $l\in\mathbb N$, one has 
$$\triangle_l(t)=\prod_{k=0}^{d-1}(t-e^{\frac{2\pi ik} d})^{h_l^{(k)}},$$
where, 
$$h_l^{(k)}:=\dim_{\mathbb C}H^l(U,\mathcal V_{d-k})=\sum_{p+q=l}h^q(\Omega_Y^p(\log E)\otimes {\mathscr L^{(k)}}^{-1}),$$ 
with $h^q(\Omega_Y^p(\log E)\otimes {\mathscr L^{(k)}}^{-1})=\dim_{\mathbb C}H^q(Y,\Omega_Y^p(\log E)\otimes {\mathscr L^{(k)}}^{-1})$, and
$$\mathscr L^{(k)}=\pi^*\mathcal O_{\mathbb P^n}\left(\sum_{j=1}^r\left\{\frac{km_j}{d}\right\}d_j\right) \otimes \mathcal O_Y\left(-\left\lfloor{\sum_{j=1}^r\left\{\frac{km_j}{d}\right\}}\pi^*D_j\right\rfloor\right).$$ 
\end{lemma}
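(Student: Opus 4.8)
The plan is to unwind the identification of the eigenspaces of the monodromy $T$ with the cohomology of the unitary local systems $\mathcal V_k$ on $U$, and then feed in the dimension computation from Lemma \ref{dimHi}. Concretely, I would proceed as follows. First, since the monodromy $T$ has finite order $d$, the vector space $H^l(M,\mathbb C)$ splits as the direct sum of its eigenspaces $H^l(M,\mathbb C)_{\zeta}$ over the $d$-th roots of unity $\zeta$, so the characteristic polynomial is automatically the product $\prod_{k=0}^{d-1}(t-e^{\frac{2\pi i k}{d}})^{h_l^{(k)}}$ where $h_l^{(k)}:=\dim_{\mathbb C}H^l(M,\mathbb C)_{e^{2\pi i k/d}}$; this is purely formal given semisimplicity. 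The content is then to identify $h_l^{(k)}$ with the stated cohomological quantity.

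Second, I would invoke Lemma \ref{eigenvector} (Budur's lemma): it gives $H^l(M,\mathbb C)_{e^{-2\pi i k'/d}}=H^l(U,\mathcal V_{k'})$ for $0\le k'\le d-1$. Reindexing by $k' = d-k$ (using that $e^{-2\pi i(d-k)/d}=e^{2\pi i k/d}$, and noting $\mathcal V_d=\mathcal V_0$ by periodicity) converts this to $H^l(M,\mathbb C)_{e^{2\pi i k/d}}=H^l(U,\mathcal V_{d-k})$ for $1\le k\le d$, so that $h_l^{(k)}=\dim_{\mathbb C}H^l(U,\mathcal V_{d-k})$. Third, I would apply Lemma \ref{dimHi}: part (ii) directly yields $\dim_{\mathbb C}H^l(U,\mathcal V_{d-k})=\sum_{p+q=l}h^q(\Omega_Y^p(\log E)\otimes{\mathscr L^{(k)}}^{-1})$ for $1\le k\le d$ and $l\ge 0$, which is exactly the desired formula for $h_l^{(k)}$ — with $\mathscr L^{(k)}$ as in \eqref{Lk}, and $\mathscr L^{(d)}=\mathscr L^{(0)}=\mathcal O_Y$ handling the $k=d$ (equivalently $k=0$) case via Remark \ref{Remark1}. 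Assembling these identifications gives the displayed product formula for $\Delta_l(t)$.

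I do not anticipate a genuine obstacle here, since the statement is essentially a bookkeeping repackaging of results already established in the excerpt (Lemmas \ref{eigenvector}, \ref{dimHi}, Proposition \ref{dimGr}, Remark \ref{Remark1}); the proof is a two- or three-line concatenation. The one point that warrants care — the closest thing to a subtlety — is the index shift between the eigenvalue convention in Lemma \ref{eigenvector} (eigenvalue $e^{-2\pi i k/d}$ attached to $\mathcal V_k$) and the convention here (eigenvalue $e^{2\pi i k/d}$), together with checking that the range $1\le k\le d$ in Lemma \ref{dimHi}(ii) matches the range $0\le k\le d-1$ appearing in the product, which is reconciled precisely by the periodicity $\mathscr L^{(d)}=\mathscr L^{(0)}$ noted at the start of Lemma \ref{dimHi}. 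Once the indices are lined up correctly, nothing further is needed.
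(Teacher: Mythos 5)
Your proposal matches the paper's argument exactly: the paper also obtains this lemma by combining Lemma \ref{eigenvector} (the eigenspace identification, with the same reindexing $k'\mapsto d-k$), Lemma \ref{dimHi}(ii), and Remark \ref{Remark1} together with the observation $\mathscr L^{(d)}=\mathscr L^{(0)}$, preceded by the same implicit appeal to semisimplicity of the finite-order automorphism $T$. Nothing to flag; the bookkeeping on the eigenvalue sign convention and the index range $1\le k\le d$ versus $0\le k\le d-1$ is handled correctly.
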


As above, we denote by $B$ the set of $k$ in $\mathbb Z$ such that $0\leq k\leq d-1$ and $d$ divides $km_j$ for all $1\leq j\leq r$, by $\overline{B}$ the complement of $B$ in $[0,d-1]\cap\mathbb Z$, and $m=\gcd(m_1,\dots,m_r)$. Due to Remark \ref{Remark1}, $B$ may be considered as a subgroup of $\mathbb Z/d\mathbb Z$. Let $G$ be the quotient group $(\mathbb Z/d\mathbb Z)/B$. For convenience, we shall identify $k\in [0,d-1]\cap \mathbb Z$ with its class in $G$.

\begin{lemma}\label{Alexander2}
With the notation as in Lemma \ref{Alexander1}, one has 
$$\Delta_l(t)=\prod_{k\in G}(t^m-e^{\frac{2\pi ikm} d})^{h_l^{(k)}}$$
for $l\in \mathbb N$. In particular, $\Delta_0(t)=t^m-1$.
\end{lemma}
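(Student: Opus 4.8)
The plan is to start from Lemma \ref{Alexander1}, which expresses
$$\Delta_l(t)=\prod_{k=0}^{d-1}(t-e^{\frac{2\pi ik}{d}})^{h_l^{(k)}},$$
and to reorganize the product according to the cosets of the subgroup $B\subset\mathbb Z/d\mathbb Z$. The first step is to observe that the exponent $h_l^{(k)}$ depends only on the class of $k$ in $G=(\mathbb Z/d\mathbb Z)/B$: indeed, $h_l^{(k)}=\dim_{\mathbb C}H^l(U,\mathcal V_{d-k})$, and if $k'\equiv k\pmod B$ then $k'-k\in B$, so by Remark \ref{Remark1} the line bundle $\mathscr L^{(k'-k)}=\mathcal O_Y$, and more substantively $\mathcal V_{d-k'}$ and $\mathcal V_{d-k}$ differ by the local system $\mathcal V_{k-k'}=\mathcal V_0$ (the trivial one), hence are isomorphic as unitary local systems; so their cohomology dimensions agree. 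Thus it is legitimate to write the exponent as $h_l^{(k)}$ with $k$ now standing for a class in $G$.

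The second step is the grouping of roots of unity. Since $|B|=m$ (Remark \ref{Remark1}), each coset of $B$ in $\mathbb Z/d\mathbb Z$ has exactly $m$ elements. I want to show that for a fixed class $k\in G$,
$$\prod_{k'\equiv k\,(B)}\bigl(t-e^{\frac{2\pi ik'}{d}}\bigr)=t^m-e^{\frac{2\pi ikm}{d}}.$$
For this, recall from Remark \ref{Remark1} that $B$ consists precisely of the multiples of $e:=\sum_{j=1}^r d_ju_j$ inside $[0,d-1]\cap\mathbb Z$, where $u_j=m_j/m$; note $me=d$. So the coset of a representative $k$ is $\{k+se : 0\le s\le m-1\}$, and the values $e^{\frac{2\pi i(k+se)}{d}}=e^{\frac{2\pi ik}{d}}\cdot\zeta_m^{s}$ where $\zeta_m=e^{\frac{2\pi ie}{d}}=e^{\frac{2\pi i}{m}}$ is a primitive $m$-th root of unity. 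Hence these are exactly the $m$ solutions of $z^m=e^{\frac{2\pi ikm}{d}}$, and the elementary factorization $\prod_{s=0}^{m-1}(t-\xi\zeta_m^s)=t^m-\xi^m$ with $\xi=e^{\frac{2\pi ik}{d}}$ gives the claimed identity. Substituting into the reorganized product yields
$$\Delta_l(t)=\prod_{k\in G}\bigl(t^m-e^{\frac{2\pi ikm}{d}}\bigr)^{h_l^{(k)}},$$
as desired.

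For the final assertion about $\Delta_0(t)$, the third step is to compute the exponents $h_0^{(k)}$ directly. By Lemma \ref{Alexander1}, $h_0^{(k)}=\dim_{\mathbb C}H^0(U,\mathcal V_{d-k})$. Since $U$ is connected, $H^0(U,\mathcal V_{d-k})$ is nonzero (and then one-dimensional) if and only if the local system $\mathcal V_{d-k}$ is trivial, which by the identification of $\mathcal V_k$ with an element of $\Pic^\tau(\mathbb P^n,D)$ (Lemma \ref{keylemma1}) happens exactly when $d-k\equiv 0$, i.e.\ $k\equiv 0$, in $G$ — equivalently $d\mid km_j$ for all $j$, i.e.\ $k\in B$. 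So $h_0^{(0)}=1$ and $h_0^{(k)}=0$ for $k\neq 0$ in $G$, giving $\Delta_0(t)=t^m-e^0=t^m-1$. The only mildly delicate point — the part I'd treat most carefully — is the coset-independence of $h_l^{(k)}$ and the claim that $\mathscr L^{(k)}$ (as a bundle on $Y$) and the local system $\mathcal V_{d-k}$ depend on $k$ only through $k\bmod B$; this follows from the group law \eqref{lawgroup} on $\Pic^\tau$ together with $\mathcal V_{k}=\mathcal V_0$ for $k\in B$ (used already in the proof of Proposition \ref{dimGr}), but it deserves an explicit sentence rather than being left implicit. Everything else is the elementary root-grouping computation above.
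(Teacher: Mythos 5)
Your proof is correct, and the first two steps are essentially what the paper does, with the additional merit of spelling out the coset-grouping of roots of unity
\[
\prod_{s=0}^{m-1}\bigl(t-e^{\frac{2\pi i(k+se)}{d}}\bigr)=t^m-e^{\frac{2\pi ikm}{d}},
\qquad e=d/m,
\]
which the paper leaves implicit (it just says ``This together with Lemma \ref{Alexander1} implies the first statement''). You also give a one-line justification of the coset-invariance $h_l^{(k)}=h_l^{(k')}$ via $\mathcal V_{d-k}=\mathcal V_{d-k'}\otimes\mathcal V_{k'-k}$ and $\mathcal V_{k'-k}=\mathcal V_0$ for $k'-k\in B$; the paper simply asserts the invariance.

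Where you genuinely diverge from the paper is in the last claim $\Delta_0(t)=t^m-1$. The paper's route is a counting argument: it invokes $\dim_{\mathbb C}H^0(M,\mathbb C)=m$ (citing Dimca, Prop.\ 2.3), writes $m=\sum_{k\in B}h^0(\mathcal O_Y)+\sum_{k\in\overline B}h^0({\mathscr L^{(k)}}^{-1})$, uses $|B|=m$ to cancel, and concludes by nonnegativity that each $h^0({\mathscr L^{(k)}}^{-1})$ with $k\in\overline B$ vanishes. Your route is more elementary and more local: since $U$ is connected, $H^0(U,\mathcal V)\neq 0$ for a rank-one local system $\mathcal V$ forces $\mathcal V$ trivial, and via $RH$ (Lemma \ref{keylemma1}) the local system $\mathcal V_{d-k}$ is trivial iff $k\in B$. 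Your argument actually subsumes the cited fact (it gives $\dim_{\mathbb C}H^0(M,\mathbb C)=\sum_k\dim_{\mathbb C}H^0(U,\mathcal V_k)=|B|=m$ as a corollary rather than an input), so it is self-contained where the paper relies on an external reference. Both arguments are sound; yours is cleaner and I would only add, as you flag yourself, one explicit sentence noting that the $\mathcal V_k$ form a cyclic group under tensor product isomorphic to $(\mathbb Z/d\mathbb Z)^{\wedge}$, which is what makes $\mathcal V_{d-k}=\mathcal V_{d-k'}\otimes\mathcal V_{k'-k}$ and the triviality criterion $\mathcal V_j=\mathbb C_U\Leftrightarrow j\in B$ immediate.
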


\begin{proof}
If $k$ and $k'$ belong to the same class in $G$, we have $h_l^{(k)}=h_l^{(k')}$. This together with Lemma \ref{Alexander1} implies the first statement. Since $h^0(\mathcal O_Y)=1$, it remains to check that $h^0({\mathscr L^{(k)}}^{-1})=0$ for $k\in G\setminus\{0\}$. By Lemmas \ref{eigenvector} and \ref{dimHi}, we have 
\begin{align}\label{BCAM19}
\dim_{\mathbb C}H^0(M,\mathbb C)=\sum_{k\in B}h^0({\mathscr L^{(k)}}^{-1})+\sum_{k\in \overline{B}}h^0({\mathscr L^{(k)}}^{-1}).
\end{align} 
It is known that $\dim_{\mathbb C}H^0(M,\mathbb C)=m$ (cf. \cite[Proposition 2.3]{Dimca}). Note that $|B|=m$ (cf. Remark \ref{Remark1}), and that, for $k\in B$, $\mathscr L^{(k)}=\mathcal O_Y$ and $h^0(\mathcal O_Y)=1$. Then (\ref{BCAM19}) is equivalent to $\sum_{k\in \overline{B}}h^0({\mathscr L^{(k)}}^{-1})=0$, which implies that $h^0({\mathscr L^{(k)}}^{-1})=0$ for $k\in\overline{B}$; in particular, $h^0({\mathscr L^{(k)}}^{-1})=0$ for $k\in G\setminus \{0\}$.
\end{proof}

Let us now consider the case where $n=2$. In this case, we shall denote $C$ (resp. $C_j$) instead of $D$ (resp. $D_j$). Then the characteristic polynomial $\Delta_1(t)$ is an important invariant of the homogeneous surface singularity. The following theorem is a main result in the present article.

\begin{theorem}\label{MainThm1}
With the previous notation and $n=2$, one has
$$\Delta_1(t)=(t^m-1)^{r-1}\prod_{k\in G\setminus \{0\}}\left(t^{2m}-2t^m\cos\frac{2km\pi}{d}+1\right)^{\ell_k},$$
where 
$$\ell_k:=\dim_{\mathbb C}H^1\left(\mathbb P^2,\mathcal J\left(\mathbb P^2,\sum_{j=1}^r\left\{\frac{km_j}{d}\right\}C_j\right)\left(\sum_{j=1}^r\left\{\frac{km_j}{d}\right\}d_j-3\right)\right).$$ 
\end{theorem}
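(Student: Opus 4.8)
The plan is to specialize the general formula of Lemma~\ref{Alexander2} to the case $n=2$ and then identify the exponents $h_1^{(k)}$ in terms of the multiplier ideal. First I would invoke Lemma~\ref{Alexander2} with $l=1$, which already gives
$$\Delta_1(t)=\prod_{k\in G}\left(t^m-e^{\frac{2\pi ikm}{d}}\right)^{h_1^{(k)}}.$$
The plan is to treat the $k=0$ factor and the $k\neq 0$ factors separately. For $k=0$ (i.e.\ $k\in B$), Lemma~\ref{dimHi}(i) gives $h_1^{(0)}=\dim_{\mathbb C}H^1(U,\mathcal V_0)=r-1$, so this contributes $(t^m-1)^{r-1}$, matching the first factor in the claimed formula.

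For the remaining classes $k\in G\setminus\{0\}$, i.e.\ $k\in\overline{B}$, I would pair up each such class with its inverse $d-k$ in $G$ and combine the two corresponding linear (in $t^m$) factors into a single quadratic factor: since $\mathcal V_{d-k}=\mathcal V_k^\vee$, complex conjugation of the unitary local systems forces $h_1^{(k)}=h_1^{(d-k)}$ (equivalently one checks directly that conjugation swaps the eigenspaces $H^1(M,\mathbb C)_{e^{\pm 2\pi ikm/d}}$ and preserves dimensions), so
$$\left(t^m-e^{\frac{2\pi ikm}{d}}\right)^{h_1^{(k)}}\left(t^m-e^{-\frac{2\pi ikm}{d}}\right)^{h_1^{(d-k)}}=\left(t^{2m}-2t^m\cos\frac{2km\pi}{d}+1\right)^{h_1^{(k)}}.$$
Here I must be careful about the case $2k\equiv 0$ in $G$ (so $d-k$ and $k$ coincide as classes), where $e^{2\pi ikm/d}=-1$ and the factor is already $(t^m+1)^{h_1^{(k)}}=(t^{2m}-2t^m\cos\pi+1)^{h_1^{(k)}/1}$ — one should note the product over $G\setminus\{0\}$ in the statement is understood with the usual convention that conjugate classes are counted once, or equivalently that $\ell_k$ in such a self-paired case is half of $h_1^{(k)}$; I would add a sentence clarifying this bookkeeping.

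The substantive step is to show that the exponent equals the stated $\ell_k$. Here I would apply Lemma~\ref{dimHi}(ii) with $n=2$, $l=j=1$: $h_1^{(k)}=\dim_{\mathbb C}H^1(U,\mathcal V_{d-k})=\sum_{p+q=1}\dim_{\mathbb C}H^q(Y,\Omega_Y^p(\log E)\otimes{\mathscr L^{(k)}}^{-1})$, i.e.\ $h_1^{(k)}=h^1({\mathscr L^{(k)}}^{-1})+h^0(\Omega_Y^1(\log E)\otimes{\mathscr L^{(k)}}^{-1})$. The last term $h^0(\Omega_Y^1(\log E)\otimes{\mathscr L^{(k)}}^{-1})$ vanishes: this is the $p=1,q=0$ piece of $Gr_F^1 H^1(U,\mathcal V_{d-k})$, and since $\mathcal V_{d-k}$ is a nontrivial unitary local system with $Gr_F^1$ in the top weight, an argument as in \cite{Esn} (or directly from the fact that on the affine variety $U$ a nontrivial unitary rank-one local system has $H^1$ concentrated in $Gr_F^0$ — Esnault-Viehweg / Libgober-type vanishing for the $p>0$ part when $q=0$) kills it; I would cite the relevant statement rather than reprove it. Then $h_1^{(k)}=h^1({\mathscr L^{(k)}}^{-1})=\dim_{\mathbb C}Gr_F^0 H^1(U,\mathcal V_{d-k})$, and Proposition~\ref{dimGr} identifies this with $\dim_{\mathbb C}H^1\!\big(\mathbb P^2,\mathcal J(\mathbb P^2,\sum_j\{\frac{km_j}{d}\}C_j)(\sum_j\{\frac{km_j}{d}\}d_j-3)\big)=\ell_k$, using $n+1=3$. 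Assembling the three pieces gives the formula.

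The main obstacle I anticipate is the vanishing $h^0(\Omega_Y^1(\log E)\otimes{\mathscr L^{(k)}}^{-1})=0$ for $k\in\overline{B}$ together with the parallel claim $h^2({\mathscr L^{(k)}}^{-1})$ not entering: one must make sure that $H^1(U,\mathcal V_{d-k})$ really has Hodge pieces only in $Gr_F^0$ and $Gr_F^1$, that the $Gr_F^1$ piece $H^0(Y,\Omega_Y^1(\log E)\otimes{\mathscr L^{(k)}}^{-1})$ vanishes because ${\mathscr L^{(k)}}^{-1}$ is a genuinely negative twist (its pushforward being a nonzero ideal sheaf times a negative line bundle $\mathcal O_{\mathbb P^2}(\sum_j\{\frac{km_j}{d}\}d_j-\ast)$), and that no $Gr_F^{\ge 1}$ contributions to $H^1$ survive. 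Beyond that the argument is bookkeeping: correctly handling the passage from the product over $k\in[0,d-1]$ to the product over the quotient $G$ and the conjugate-pairing of factors.
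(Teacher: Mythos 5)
Your proposal breaks at the crucial step. The vanishing you assert, namely $h^0(\Omega_Y^1(\log E)\otimes{\mathscr L^{(k)}}^{-1})=0$ for $k\in\overline B$, is false in general, and the whole point of the paper's proof is to identify this term as $\ell_{d-k}$, which is typically nonzero. Unitary rank-one local systems of finite order on $U$ do not have $H^1$ concentrated in $Gr_F^0$; the $Gr_F^1$ piece $H^0(Y,\Omega_Y^1(\log E)\otimes{\mathscr L^{(k)}}^{-1})$ survives and carries genuine information (this is precisely the conjugate-dual contribution that already appears in the Loeser--Vaqui\'e formula for the Alexander polynomial of a reduced plane curve). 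Your ``Esnault--Viehweg / Libgober-type vanishing'' does not apply here: the relevant vanishing theorems kill higher direct images or give Akizuki--Nakano-type statements for ample twists, not $H^0$ of a log-differential sheaf twisted by these particular negative sheaves. If your vanishing held, the conjugate symmetry $h_1^{(k)}=h_1^{(d-k)}$ would force $\ell_k=\ell_{d-k}$ for all $k$, which is simply not true (e.g.\ for a sextic with six cusps on a conic, $\Delta_1(t)=t^2-t+1$ and one of $\ell_1,\ell_5$ is $1$ while the other is $0$).

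The bookkeeping also does not work out the way you want. Expanding $t^{2m}-2t^m\cos\frac{2km\pi}{d}+1=(t^m-e^{2\pi ikm/d})(t^m-e^{-2\pi ikm/d})$ and comparing with Lemma~\ref{Alexander2}, the exponent of $(t^m-e^{2\pi ikm/d})$ in the statement is $\ell_k+\ell_{d-k}$, and this must equal $h_1^{(k)}$. So the target identity is $h_1^{(k)}=\ell_k+\ell_{d-k}$, not $h_1^{(k)}=\ell_k$. The product in the theorem really is over \emph{all} $k\in G\setminus\{0\}$; there is no ``conjugate classes counted once'' convention. With your $h_1^{(k)}=\ell_k$ the resulting polynomial would have half the correct exponents, and the sentence you would add to ``clarify the bookkeeping'' would be disguising the error rather than fixing it. What the paper actually proves is the two equalities $h^1({\mathscr L^{(k)}}^{-1})=\ell_k$ and $h^0(\Omega_Y^1(\log E)\otimes{\mathscr L^{(k)}}^{-1})=\ell_{d-k}$; the first is, as you say, direct from Proposition~\ref{dimGr} and Lemma~\ref{dimHi}, but the second requires the $G$-equivariant desingularization $Z$ of the normal $G$-covers, the Leray spectral sequence for $\rho\circ\nu$, the decomposition of $(\rho\circ\nu)_*\mathcal O_Z$, Budur's computation of $H^0(Z,\Omega_Z^1)$, an Esnault-style count giving $\dim H^0(Z,\Omega_Z^1(\log\Delta))=\dim H^0(Z,\Omega_Z^1)+(r-1)$, and the Loeser--Vaqui\'e inequality $h^0(\Omega_Y^1(\log E)\otimes{\mathscr L^{(k)}}^{-1})\geq\ell_{d-k}$ to convert a sum equality into termwise equalities. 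None of that machinery appears in your sketch, and without it there is no way to reach the stated formula.
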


\begin{proof}
According to Lemma \ref{Alexander2}, it suffices to prove that 
\begin{align}\label{Eq1}
h^1({\mathscr L^{(k)}}^{-1})=\ell_k
\end{align}
and
\begin{align}\label{Eq2}
h^0(\Omega_Y^1(\log E)\otimes {\mathscr L^{(k)}}^{-1})=\ell_{d-k},
\end{align}
for $k\in G\setminus \{0\}$. The equality (\ref{Eq1}) is a direct corollary of Proposition \ref{dimGr} and Lemma \ref{dimHi}.

To prove (\ref{Eq2}) we consider a common $G$-equivariant desingularization of $\widetilde X$ and $\widetilde Y$, say, $\theta: Z\to \widetilde X$ and $\nu: Z\to \widetilde Y$, in the sense of \cite{Abr}, such that $\pi\circ \rho \circ \nu=\phi\circ \theta=:u$. Here, we use the notation in Section \ref{LSCC} with $X=\mathbb P^2$, and, in particular, we use the the normal $G$-cover of $\mathbb P^2$,
$$\phi: \widetilde X=\Spec_{\mathcal O_{\mathbb P^2}}\left(\bigoplus_{k\in G}\mathcal O_{\mathbb P^2}\Big(-\sum_{j=1}^r\Big\{\frac{km_j}{d}\Big\}d_j\Big)\right)\to \mathbb P^2,$$
and the normal $G$-cover of $Y$,
$$\rho: \widetilde Y=\Spec_{\mathcal O_Y}\left(\bigoplus_{k\in G}{\mathscr L^{(k)}}^{-1}\right)\to Y,$$
where as mentioned previously we identify $k\in [0,d-1]\cap \mathbb Z$ with its class in $G$.

Note that $G^*=\{(\mathcal O_{\mathbb P^2}(\sum_{j=1}^r\{\frac{km_j}{d}\}d_j), (\{\frac{km_1}{d}\},\dots,\{\frac{km_r}{d}\}))\}_{0\leq k\leq d-1}$, which is by Remark \ref{Remark1} a subgroup of order $\frac d m$ of the group $\Pic^{\tau}(\mathbb P^2,C)$. We may choose $Z$ such that $\Delta:=Z\setminus u^{-1}(U)$ is a normal crossing divisor. An analogue of \cite[Corollaire 4]{Esn} shows that, for any $q\in \mathbb N$, 
\begin{equation}\label{Eq4.3}
\begin{aligned}
(\rho\circ\nu)_*\Omega_Z^q(\log\Delta)&\cong \Omega_Y^q(\log E)\otimes (\rho\circ\nu)_*\mathcal O_Z,\\ 
R^p(\rho\circ\nu)_*\Omega_Z^q(\log\Delta)&=0 \quad \text{if} \ p>0
\end{aligned}
\end{equation}
(see also \cite[Lemma 3.22]{EV}). By the Leray spectral sequence
$$E_2^{p,q}=H^q(Y,R^p(\rho\circ\nu)_*\Omega_Z^1(\log\Delta))\Rightarrow H^{p+q}(Z,\Omega_Z^1(\log\Delta))$$
and by (\ref{Eq4.3}), we get, in particular,
\begin{align}\label{Eq4.4}
H^0(Y,\Omega_Y^1(\log E)\otimes (\rho\circ\nu)_*\mathcal O_Z)=H^0(Z,\Omega_Z^1(\log\Delta)).
\end{align}
Since $G^*$ is a finite subgroup of $\Pic^{\tau}(\mathbb P^2,C)$, we deduce from Proposition \ref{decomp2} that 
$$(\rho\circ\nu)_*\mathcal O_Z=\rho_*\mathcal O_{\widetilde Y}=\bigoplus_{k\in G}{\mathscr L^{(k)}}^{-1}.$$
This yields the following decomposition 
\begin{align}\label{Eq4.5}
H^0(Y,\Omega_Y^1(\log E)\otimes (\rho\circ\nu)_*\mathcal O_Z)=\bigoplus_{k\in G}H^0(Y,\Omega_Y^1(\log E)\otimes {\mathscr L^{(k)}}^{-1}).
\end{align}
Note that, due to the proof of Lemma \ref{dimHi}, the direct summand of (\ref{Eq4.5}) corresponding to $k=0$ has complex dimension $r-1$.

Now we compute the dimension of complex vector space on the right hand side of (\ref{Eq4.4}). Similarly as in the proof of Lemma 7 of \cite{Esn}, one may point out that 
\begin{align}\label{Eq4.6}
\dim_{\mathbb C}H^0(Z,\Omega_Z^1(\log\Delta))=\dim_{\mathbb C}H^0(Z,\Omega_Z^1)+(r-1).
\end{align}
On the other hand, by \cite[Corollary 1.13]{Bu3}, we have 
\begin{align}\label{Eq4.7}
H^0(Z,\Omega_Z^1)\cong \bigoplus_{k\in G}H^1\left(\mathbb P^2,\mathcal J\left(\mathbb P^2,\sum_{j=1}^r\left\{\frac{km_j}{d}\right\}C_j\right)\left(\sum_{j=1}^r\left\{\frac{km_j}{d}\right\}d_j-3\right)\right).
\end{align}
In the decomposition (\ref{Eq4.7}), look at the direct summand corresponding to $k=0$. They are nothing but $H^1(\mathbb P^2,\mathcal O_{\mathbb P^2}(-3))=H^1(\mathbb P^2,\omega_{\mathbb P^2})$. By the Serre duality, $\dim_{\mathbb C}H^1(\mathbb P^2,\omega_{\mathbb P^2})=\dim_{\mathbb C}H^1(\mathbb P^2,\mathcal O_{\mathbb P^2})=0$.
Therefore, from (\ref{Eq4.4}), (\ref{Eq4.5}), (\ref{Eq4.6}) and (\ref{Eq4.7}), we get 
\begin{align}\label{Eq4.8}
\sum_{k\in G\setminus\{0\}}h^0(\Omega_Y^1(\log E)\otimes {\mathscr L^{(k)}}^{-1})=\sum_{k\in G\setminus\{0\}}\ell_k.
\end{align}
Repeating the proof of \cite[Proposition 4.6]{LV} and using (\ref{Eq1}) we obtain that 
$$h^0(\Omega_Y^1(\log E)\otimes {\mathscr L^{(k)}}^{-1})\geq \ell_{d-k},$$
for $k\in G\setminus\{0\}$. This together with (\ref{Eq4.8}) means the equality $h^0(\Omega_Y^1(\log E)\otimes {\mathscr L^{(k)}}^{-1})=\ell_{d-k}$, thus (\ref{Eq2}) is proved.
\end{proof}




\subsection{A formula for the monodromy zeta function}\label{Mainresult2}
By definition, the monodromy zeta function the homogeneous singularity $f(x_0,\dots,x_n)$ at the origin $O$ of $\mathbb C^{n+1}$ is the function 
$$\zeta_{f,O}(t)=\prod_{l\geq 0}\det(\Id-tT|_{H^l(M,\mathbb C)})^{(-1)^{l+1}}.$$
This function may be expressed via the polynomials $\Delta_l(t)$ as $\zeta_{f,O}(t)=\prod_{l\geq 0}(t^{\dim_{\mathbb C}H^l(M,\mathbb C)}\Delta_l(\frac 1 t))^{(-1)^{l+1}}$, from which, by Lemma \ref{Alexander2},
\begin{equation}\label{Eq4.10}
\zeta_{f,O}(t)=\prod_{k\in G}\left(1-e^{\frac{2\pi ikm} d}t^m\right)^{\sum_{l\geq 0}(-1)^{l+1}h_l^{(k)}}.
\end{equation}

As explained in \cite{Bu4}, the only numbers $\alpha\in (0,n+1)\cap\mathbb Q$ such that $n_{\alpha,O}(f)$, the coefficients of $t^{\alpha}$ in $\Sp(f,O)$, can be nonzero are of the form $\frac k d+p$, with $k, p\in \mathbb Z$, $1\leq k\leq d$ and $0\leq p\leq n+1$. Then it implies from (\ref{Hodgemult}) and Lemma \ref{eigenvector} that
\begin{equation}\label{Eq4.11}
\begin{aligned}
n_{\frac k d +p,O}(f)=\sum_{j\in \mathbb Z}(-1)^j\dim_{\mathbb C}Gr_F^{n-p}H^{n+j}(U,\mathcal V_k), 
\end{aligned}
\end{equation}
for integers $1\leq k\leq d$ and $0\leq p\leq n+1$, where $\mathcal V_k$ is the local system corresponding to the element $(\mathcal O_{\mathbb P^2}(\sum_{j=1}^r\{\frac{km_j}{d}\}d_j), (\{\frac{km_1}{d}\},\dots,\{\frac{km_r}{d}\}))$ in $\Pic^{\tau}(X,D)$ via the isomorphism $RH$ in Theorem \ref{RH} (cf. Lemma \ref{keylemma1}). Note that $\mathcal V_d=\mathcal V_0$. By Proposition \ref{dimGr} and (\ref{Eq4.11}), we have
\begin{align}\label{Eq4.12}
n_{\frac{d-k}{d}+p,O}(f)=\sum_{j\in \mathbb Z}(-1)^jh^{p+j}(\Omega_Y^{n-p}(\log E)\otimes {\mathscr L^{(k)}}^{-1}), 
\end{align}
for $k\in G$ when $p<n$, and $k\in G\setminus \{0\}$ when $p=n$, where $\mathscr L^{(k)}$ and $h^q(\Omega_Y^p(\log E)\otimes {\mathscr L^{(k)}}^{-1})$ are as in Lemma \ref{Alexander1} (see also (\ref{Lk})). 

\begin{theorem}\label{Thm4.4}
The monodromy zeta function and the Hodge spectrum of the singularity $f$ are fit into a relation as follows
$$\zeta_{f,O}(t)^{(-1)^{n+1}}=\left(1-t^m\right)^{1+\sum_{p=1}^nn_{p,O}(f)}\prod_{k\in G\setminus\{0\}}\left(1-e^{\frac{2\pi ikm} d}t^m\right)^{\sum_{p=0}^nn_{\frac{d-k}{d}+p,O}(f)}.$$
\end{theorem}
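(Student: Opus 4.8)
The plan is to compare the two sides one factor at a time. By $(\ref{Eq4.10})$,
\[
\zeta_{f,O}(t)^{(-1)^{n+1}}=\prod_{k\in G}\Bigl(1-e^{\frac{2\pi ikm}{d}}t^m\Bigr)^{(-1)^{n+1}\sum_{l\geq 0}(-1)^{l+1}h_l^{(k)}},
\]
and the polynomials $1-e^{\frac{2\pi ikm}{d}}t^m$, $k\in G$, are pairwise coprime in $\mathbb C[t]$ (their roots are the $m$-th roots of the pairwise distinct numbers $e^{-2\pi ikm/d}$; this uses that $B$ is a subgroup, cf. Remark \ref{Remark1}). Hence the theorem reduces to two scalar identities: $(-1)^{n+1}\sum_{l\geq 0}(-1)^{l+1}h_l^{(k)}=\sum_{p=0}^{n}n_{\frac{d-k}{d}+p,O}(f)$ for $k\in G\setminus\{0\}$, and $(-1)^{n+1}\sum_{l\geq 0}(-1)^{l+1}h_l^{(0)}=1+\sum_{p=1}^{n}n_{p,O}(f)$. (Conceptually this is \cite[Proposition 4.3]{Bu4} combined with Proposition \ref{dimGr}; what follows is the bookkeeping.)

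First I would turn the left-hand exponent into an alternating sum of Euler characteristics. By Lemma \ref{dimHi}(ii), $h_l^{(k)}=\sum_{p+q=l}h^q\bigl(\Omega_Y^{p}(\log E)\otimes{\mathscr L^{(k)}}^{-1}\bigr)$; gathering the alternating sum over $q$ into the sheaf Euler characteristic $\chi(Y,-)$ and then substituting $p\mapsto n-p$ gives
\[
(-1)^{n+1}\sum_{l\geq 0}(-1)^{l+1}h_l^{(k)}=\sum_{p=0}^{n}(-1)^{p}\,\chi\bigl(Y,\Omega_Y^{n-p}(\log E)\otimes{\mathscr L^{(k)}}^{-1}\bigr).
\]
On the other hand $(\ref{Eq4.12})$, rewritten as $\sum_{j}(-1)^{j}h^{p+j}(\mathscr F)=(-1)^{p}\chi(Y,\mathscr F)$ with $\mathscr F=\Omega_Y^{n-p}(\log E)\otimes{\mathscr L^{(k)}}^{-1}$, says precisely that the $p$-th summand in the last display equals $n_{\frac{d-k}{d}+p,O}(f)$ — whenever $(\ref{Eq4.12})$ is available at that $p$. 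For $k\neq 0$ it is available for every $p=0,\dots,n$, so the first identity drops out at once and assembles the product over $G\setminus\{0\}$ in the theorem.

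The genuinely delicate case is $k=0$, where ${\mathscr L^{(0)}}=\mathcal O_Y$ and $(\ref{Eq4.12})$ holds only for $p=0,\dots,n-1$: its hypothesis excludes the pair $(k,p)=(0,n)$, which is the value $\alpha=n+1$ with $n_{n+1,O}(f)=0$. Thus the $p=n$ term of the Euler-characteristic sum is not $0$ but $(-1)^{n}\chi(Y,\mathcal O_Y)=(-1)^{n}$ — here one uses that $Y$ is rational, so $\chi(Y,\mathcal O_Y)=h^0(\mathcal O_Y)=1$ (the input also used for $\Delta_0(t)=t^m-1$ in Lemma \ref{Alexander2}), which for $n=2$ equals $1$ — while the terms $p=0,\dots,n-1$ recombine, after reindexing, into $\sum_{p=1}^{n}n_{p,O}(f)$ (recall $n_{0,O}(f)=0$). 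This yields the exponent $1+\sum_{p=1}^{n}n_{p,O}(f)$ of $1-t^m$; and $h^0({\mathscr L^{(k)}}^{-1})=0$ for $k\in G\setminus\{0\}$ (Lemma \ref{Alexander2}) guarantees this constant contribution attaches only to the $k=0$ factor. The main obstacle is exactly this boundary correction at $(k,p)=(0,n)$: one must isolate the contribution of $H^0(M,\mathbb C)$ — equivalently, the mismatch between ordinary and reduced cohomology in degree $0$ — which is what breaks the symmetry between the $1-t^m$ factor and the remaining ones.
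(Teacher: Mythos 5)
Your overall strategy is the same as the paper's: start from $(\ref{Eq4.10})$, reduce to a scalar identity for each $k\in G$, and prove each identity by summing $(\ref{Eq4.12})$ over $p=0,\dots,n$. Your repackaging into Euler characteristics and the explicit separation into two scalar identities is a clean way to organize the bookkeeping that the paper does more tersely, and you correctly flag the $(k,p)=(0,n)$ boundary as the only subtle point.

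However there is a genuine gap precisely at that boundary. You correctly compute the $p=n$ summand of $\sum_{p=0}^{n}(-1)^{p}\chi(Y,\Omega_Y^{n-p}(\log E))$ to be $(-1)^{n}\chi(Y,\mathcal O_Y)=(-1)^{n}$, and then in the very next clause you assert that this ``yields the exponent $1+\sum_{p=1}^{n}n_{p,O}(f)$.'' That step silently replaces $(-1)^{n}$ by $1$; your own parenthetical ``which for $n=2$ equals $1$'' shows you noticed the issue but did not resolve it. For odd $n$ the chain of equalities you have written produces the exponent $(-1)^{n}+\sum_{p=1}^{n}n_{p,O}(f)=-1+\sum_{p=1}^{n}n_{p,O}(f)$, not $1+\sum_{p=1}^{n}n_{p,O}(f)$, so the proof as written does not establish the stated identity for general $n$.

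In fact this is not a defect of your write-up alone: the paper's proof contains the same sign slip. Its first display, $(-1)^{n+1}+(-1)^{n+1}\sum_{p=0}^{n-1}n_{p+1,O}(f)=\sum_{j}(-1)^{n+j+1}h_{n+j}^{(0)}$, is equivalent after multiplying by $(-1)^{n+1}$ to your claim that the boundary term is $1$, whereas the correct constant there is $-1$ (equivalently, the $p=n$ term is $(-1)^n$ as you found). A direct check exhibits the discrepancy: for $f=x_0^{d}$ at $O\in\mathbb C^2$ one has $n=1$, $m=d$, $G=\{0\}$, $\zeta_{f,O}(t)=(1-t^{d})^{-1}$, and $\Sp(f,O)$ is supported in $(1,2)$ so that $n_{1,O}(f)=0$; hence $\zeta_{f,O}(t)^{(-1)^{n+1}}=(1-t^{d})^{-1}$, while the right-hand side of the theorem would be $(1-t^{d})^{1+0}=1-t^{d}$. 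The argument you gave (kept exact, without replacing $(-1)^n$ by $1$) gives the exponent $(-1)^{n}+\sum_{p=1}^{n}n_{p,O}(f)$ for the $1-t^{m}$ factor, which does match this example; so the fix is to carry the $(-1)^n$ through honestly rather than to assume it equals $1$.
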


\begin{proof}
Recall from Lemma \ref{Alexander1} that $h_l^{(k)}=\sum_{p+q=l}h^q(\Omega_Y^p(\log E)\otimes {\mathscr L^{(k)}}^{-1})$. Since $h^0(\mathcal O_Y)=1$ and $h^q(\mathcal O_Y)=0$ for all $q\geq 1$, the formula (\ref{Eq4.12}) gives
$$
(-1)^{n+1}+(-1)^{n+1}\sum_{p=0}^{n-1}n_{p+1,O}(f)=\sum_{j\in \mathbb Z}(-1)^{n+j+1}h_{n+j}^{(0)}. 
$$
As in the proof of Lemma \ref{Alexander2}, if $k\in G\setminus \{0\}$, then $h^0({\mathscr L^{(k)}}^{-1})=0$, thus by (\ref{Eq4.12}) we have
$$
(-1)^{n+1}\sum_{p=0}^nn_{\frac{d-k}{d}+p,O}(f)=\sum_{j\in \mathbb Z}(-1)^{n+j+1}h_{n+j}^{(k)}. 
$$
Now applying (\ref{Eq4.10}) we obtain the statement of the theorem.
\end{proof}

\begin{remark}
The formula (\ref{Eq4.12}) has the following interesting consequence. Assume that $f(x_0,\dots,x_n)$ is a homogeneous polynomial and has isolated singularity at the origin $O$ of $\mathbb C^{n+1}$. Then the non-trivial characteristic polynomials of the singularity only appear in the degrees $0$ and $n$. This means that $h_l^{(k)}=0$ for all $l\not\in \{0,n\}$ and $0\leq k\leq d-1$. Using the proof of Theorem \ref{Thm4.4}, we obtain the identities  
\begin{align*}
h_n^{(0)}=\sum_{p=1}^nn_{p,O}(f) \quad \text{and}\quad 
h_n^{(k)}&=\sum_{p=0}^nn_{\frac{d-k}{d}+p,O}(f) \ \text{for $1\leq k\leq d-1$},
\end{align*}
which prove the below result. By convention, we may consider the zero space $\{0\}$ as an eigenspace of the monodromy of the singularity with dimension zero.

\begin{corollary}
Let $f(x_0,\dots,x_n)\in \mathbb C[x_0,\dots,x_n]$ be a homogeneous polynomial defining an isolated singularity at the origin $O$ of $\mathbb C^{n+1}$. Then the complex dimension of the eigenspace of the monodromy of the singularity with respect to the eigenvalue $1$ (resp. $e^{\frac{2\pi ik}{d}}$, for $1\leq k\leq d-1$) is $\sum_{p=1}^nn_{p,O}(f)$ (resp. $\sum_{p=0}^nn_{\frac{d-k}{d}+p,O}(f)$, for $1\leq k\leq d-1$).
\end{corollary}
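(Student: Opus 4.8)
The plan is to deduce the Corollary from the two numerical identities isolated in the Remark,
\[
h_n^{(0)}=\sum_{p=1}^n n_{p,O}(f),\qquad h_n^{(k)}=\sum_{p=0}^n n_{\frac{d-k}{d}+p,O}(f)\quad(1\le k\le d-1).
\]
Granting these, Lemma~\ref{eigenvector} identifies $h_n^{(k)}=\dim_{\mathbb C}H^n(U,\mathcal V_{d-k})$ with the dimension of the $e^{2\pi ik/d}$-eigenspace of $T$ on $H^n(M,\mathbb C)$ (for $k=0$, the $1$-eigenspace); since by the bouquet theorem the vanishing cohomology of an isolated singularity lives only in degree $n$, $H^n(M,\mathbb C)$ is the whole monodromy module, and a non-occurring eigenvalue simply gives $h_n^{(k)}=0$, matching the convention in the statement. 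So the real task is the two displayed identities, and the mechanism is that the alternating sum over cohomological degrees in the Hodge-spectrum formula collapses to its degree-$n$ term.

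First I would record the topological input. For $n\ge1$ an isolated singularity forces $f$ to be reduced — otherwise a component $D_j$ with $m_j\ge2$ would push the positive-dimensional affine cone over $D_j$ into the singular locus — so $m=\gcd(m_1,\dots,m_r)=1$, $B=\{0\}$ and $G=\mathbb Z/d\mathbb Z$. Milnor's fibration theorem gives $M\simeq\bigvee_{\mu}S^n$, whence $\widetilde H^l(M,\mathbb C)=0$ for $l\ne n$ and $H^0(M,\mathbb C)=\mathbb C$ with trivial monodromy; passing through Lemma~\ref{eigenvector} this reads $h_l^{(k)}=0$ for every $k$ and every $l\notin\{0,n\}$, $h_0^{(0)}=1$, and $h_0^{(k)}=0$ for $k\in G\setminus\{0\}$.

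Next I would substitute this into the Hodge-spectrum formula. Combining (\ref{Eq4.11}) (equivalently (\ref{Eq4.12})) with Lemma~\ref{eigenvector} one gets, for $1\le k\le d-1$ and $0\le p\le n$,
\[
n_{\frac{d-k}{d}+p,O}(f)=\sum_{j\in\mathbb Z}(-1)^j\dim_{\mathbb C}Gr_F^{n-p}H^{n+j}(M,\mathbb C)_{e^{2\pi ik/d}}.
\]
Since $e^{2\pi ik/d}\ne 1$ and $\widetilde H^l(M,\mathbb C)$ vanishes off degree $n$, only the term $j=0$ survives, so the right-hand side equals $\dim_{\mathbb C}Gr_F^{n-p}H^n(M,\mathbb C)_{e^{2\pi ik/d}}$; because the Hodge filtration on $H^n$ of the $n$-dimensional variety $M$ is carried by the degrees $0,\dots,n$ and is compatible with the semisimple monodromy, summing over $p=0,\dots,n$ telescopes to $\dim_{\mathbb C}H^n(M,\mathbb C)_{e^{2\pi ik/d}}=h_n^{(k)}$, the second identity. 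For the eigenvalue $1$ the same computation, now with integer argument $\alpha=p$, yields $n_{p,O}(f)=\dim_{\mathbb C}Gr_F^{n-p+1}H^n(M,\mathbb C)_1$ for $1\le p\le n$ (the degree-$0$ term being killed by $Gr_F^{\ge 1}$), hence $\sum_{p=1}^n n_{p,O}(f)=\dim_{\mathbb C}H^n(M,\mathbb C)_1-\dim_{\mathbb C}Gr_F^{0}H^n(M,\mathbb C)_1$; and the correction vanishes, since applying the particular case of Theorem~\ref{key} to the trivial local system $\mathcal V_0$ on $U=\mathbb P^n\setminus D$ gives $Gr_F^{0}H^n(U,\mathcal V_0)=H^0(\mathbb P^n,\omega_{\mathbb P^n})^{\vee}=H^0(\mathbb P^n,\mathcal O_{\mathbb P^n}(-n-1))^{\vee}=0$ and $H^n(U,\mathcal V_0)=H^n(M,\mathbb C)_1$ by Lemma~\ref{eigenvector}. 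Thus $\sum_{p=1}^n n_{p,O}(f)=h_n^{(0)}$, and the Corollary follows.

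The main obstacle is the bookkeeping for the eigenvalue $1$: one must notice that the integer spectral values attached to the eigenvalue $1$ sit at $\alpha=1,\dots,n$, so their graded pieces account only for $Gr_F^{1},\dots,Gr_F^{n}$ of $H^n(M,\mathbb C)_1$, and then supply the missing vanishing $Gr_F^{0}H^n(M,\mathbb C)_1=0$ from Theorem~\ref{key} (equivalently, from the fact that the spectrum is supported in $(0,n+1)$, so $n_{n+1,O}(f)=0$). Everything else is a formal consequence of the single-degree concentration of $\widetilde H^{\bullet}(M,\mathbb C)$ together with the compatibility of the eigenspace decomposition with the Hodge filtration, both already recorded above.
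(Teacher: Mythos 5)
Your argument is correct and follows essentially the same route as the paper: both deduce the Corollary from the two identities $h_n^{(0)}=\sum_{p=1}^n n_{p,O}(f)$ and $h_n^{(k)}=\sum_{p=0}^n n_{\frac{d-k}{d}+p,O}(f)$, and both obtain these by combining the concentration of $\widetilde H^\bullet(M,\mathbb C)$ in degree $n$ with the Hodge-spectrum formula and then telescoping over the Hodge degrees. The only difference is cosmetic: the paper cites the proof of Theorem~\ref{Thm4.4}, which works through (\ref{Eq4.12}) on the resolution $Y$ and uses $h^0(\mathcal O_Y)=1$, $h^q(\mathcal O_Y)=0$ for $q\ge1$, and $h^0({\mathscr L^{(k)}}^{-1})=0$ to account for the boundary graded piece; you instead work directly with (\ref{Eq4.11}) on $H^\bullet(M,\mathbb C)$, kill the $j\ne0$ contributions by the eigenspace vanishing $H^0(M,\mathbb C)_{e^{2\pi ik/d}}=0$ for $k\ne0$ together with the Hodge type of $H^0(M,\mathbb C)_1$, and supply the missing piece by the clean observation $Gr_F^0H^n(U,\mathcal V_0)=H^0(\mathbb P^n,\omega_{\mathbb P^n})^\vee=0$ from the particular case of Theorem~\ref{key}. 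This last step makes the accounting for the eigenvalue~$1$ more transparent than the paper's bookkeeping through $h^0(\mathcal O_Y)$ and $h_0^{(0)}$, and your preliminary remark that an isolated singularity forces $f$ to be reduced (hence $m=1$, $B=\{0\}$, $G=\mathbb Z/d\mathbb Z$) is a useful clarification that the paper leaves implicit.
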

\end{remark}

\begin{ack}
The second and third authors thank Vietnam Institute for Advanced Study in Mathematics and Department of Mathematics - KU Leuven for warm hospitality during their visits. The third author is grateful to Nero Budur for valuable discussions.
\end{ack}


\end{document}